\documentclass{amsart}
\usepackage{graphicx} 
\usepackage{amsmath,amsfonts,bm}
\usepackage{latexsym,amssymb,verbatim}
\usepackage{paralist}
\usepackage{ulem}
\usepackage{cancel}
\usepackage{mathrsfs} 
\usepackage{graphics} 
\usepackage{epsfig} 
\usepackage{epstopdf}
\usepackage{verbatim}
\usepackage{overpic}
\usepackage{tikz}
\usepackage[colorlinks=true]{hyperref}
\hypersetup{urlcolor=blue, citecolor=red}

\definecolor{verdeoscuro}{rgb}{0.0, 0.5, 0.0}

\newtheorem{lema}{Lemma}[section]
\newtheorem{theo}[lema]{Theorem}
\newtheorem{prop}[lema]{Proposition}

\theoremstyle{definition}

\theoremstyle{remark}
\newtheorem{rema}[lema]{Remark}

\renewcommand{\div}{\operatorname{div}}

\newcommand{\R}{\mathbb{R}}

\newcommand{\ep}{\varepsilon}

\newcommand{\msp}{\mspace{1mu}}

\newcommand{\TCL}[1]{\mathcal{C}^\mathcal{T}_{#1}(\mathbf{0})}

\title[Tangential center problem]{Tangential center problem for  planar Hamiltonian differential equations}
\author{M.J. \'Alvarez}
\address{Departament de Matem{\`a}tiques i Inform{\`a}tica, IAC3 Institute of Applied Computing \& Community Code, Universitat de les Illes Ba\-lears, 07122, Palma de Mallorca, Spain}
\email{chus.alvarez@uib.es}

\author{J.L. Bravo}
\address{Departamento de Matematicas, Universidad de Extremadura, 06071 Badajoz, Spain}
\email{trinidad@unex.es}

\author{L.A. Calder\'on*}
\address{Departamento de Matemáticas, Universidad de Castilla-La Mancha, 13071 Ciudad Real, Spain}
\email{luisangel.calderon@uclm.es}

\author{S. Rebollo-Perdomo}
\address{Departamento de Matemática, Universidad del Bío-Bío, Concepción, Chile}
\email{srebollo@ubiobio.cl}

\subjclass{Primary: 34C05; Secondary:  34C25, 37C27}

\keywords{Tangencial center problem; Center-focus problem; Periodic orbit}

\begin{document}
	
	\begin{abstract}
		Consider a planar Hamiltonian differential equation with a nondegenerate center
		at the origin. The Tangential Center Problem asks for a characterization of polynomial perturbations for which the center persists at first order. This problem plays a fundamental role in the study of the center-focus problem. We propose an effective algorithmic approach to the Tangential Center Problem for planar polynomial Hamiltonian differential equations. Using this approach, we obtain a complete solution of the quadratic Tangential Center Problem. We further apply it to a non-generic family of cubic Hamiltonian differential equations containing the planar Hamiltonian Lotka--Volterra equation. For a general cubic polynomial perturbation, we derive necessary and sufficient conditions for the vanishing of the corresponding first Poincar\'e--Pontryagin--Melnikov function and characterize all tangential centers. 
	\end{abstract}
	
	\maketitle
	
	\section{Introduction}
	
	This work addresses one of the central problems in the qualitative theory of planar dynamical systems, namely the center--focus problem: given a planar analytic differential equation with an isolated singularity at the origin, determine whether all nearby orbits are closed (in which case the origin is a center) or spiral towards or away from the origin (in which case the origin is a focus).

	The problem dates back to Poincaré and was formally solved by Lyapunov (see \cite{IlyaYako} or ~\cite{Lyapunov} for further details). His solution relies on the computation of the
	so-called Lyapunov constants. However, despite the theoretical completeness of this approach, explicitly determining these constants for a given differential equation is extremely difficult in practice. Consequently, the center--focus problem remains largely unresolved from a practical standpoint, even for low-degree families such as
	planar polynomial differential equations of degree three
	\cite{BIRS,Gasull}.
	
	In light of this situation,
	several approaches and methods have been developed
	to tackle the problem. Among them are: the construction of Lyapunov functions following the foundational work of Lyapunov \cite{Chav,PLC,Schlomiuk}, normal form theory  \cite{Andreev17,HaWa,LiuLi}, analysis of the power series expansion
	of solutions in polar coordinates \cite{ARJMAA03,ALGM,GGM}, algebraic approaches involving the structure of the Lyapunov constants \cite{CGMM,Joyal}, the Lyapunov-Schmidt reduction method \cite{FLLL}, and averaging techniques \cite{SV}. 
	
	Given the complexity of the center-focus problem, several authors have proposed graded classifications of centers into different levels, based on analytic, algebraic and geometric criteria. A particularly promising approach is 
	the study of the persistence of centers in polynomial Hamiltonian equations under small polynomial perturbations. This framework aims to make the problem more tractable by identifying intermediate structures that govern the unfolding and stability of centers.

	The general question we are address is the following: given a Hamiltonian differential equation with a center at the origin, characterize all polynomial perturbations for which the origin remains a center. This problem is known in the literature as the Infinitesimal Center Problem. In this article, we focus on its first-order version, which will be defined more precisely below and is known as the Tangential Center Problem.

	Both the Tangential Center Problem and the Infinitesimal Center Problem have been widely studied. 
	For instance, the Tangential Center Problem has been solved for Hamiltonians $H$ that factor into products of lines in generic position \cite{Uribe,PelletierUribe}, as well as for products of polynomials satisfying certain general-position assumptions~\cite{PontigoHerrera}. It has also been solved for hyperelliptic Hamiltonians~\cite{CM}. Variants of both problems have been formulated and solved in other settings, including Abel equations with polynomial coefficients~\cite{PM,Pakovich3} and zero-dimensional cycles~\cite{ABCM}.
	Despite substantial progress, both problems remain open in general.
	
	In this work, we focus exclusively on the Tangential Center Problem.
	Following a computational approach, we propose an algorithm for analyzing this problem, which enables us to solve it for low-degree differential equations. More precisely, we completely solve the quadratic Tangential Center Problem, which, to the best of our knowledge, has remained open in the literature. Our result therefore extends previous results that were obtained only under genericity assumptions.

	Finally, to illustrate the algorithm in a more challenging setting, we  consider the Tangential Center Problem for cubic planar Hamiltonian differential equations associated to the Hamiltonian function
	\begin{equation*}
		H(x,y)=\frac{x^2+y^2}{2}+x^2y-\frac{y^3}{3}+ax^4+bx^2y^2+cy^4.
	\end{equation*}
	This family generalizes the quadratic Hamiltonian Lotka--Volterra system by adding a homogeneous quartic term while preserving one of its symmetries.  The results of Ilyashenko~\cite{Ilyashenko} solve both the Tangential and Infinitesimal Center Problems for generic Morse Hamiltonians, showing that only exact perturbations solve either problem. This motivates us to consider non-generic families. In particular, we study a non-generic cubic Hamiltonian equation that contains the most degenerate quadratic Hamiltonian equation. We then consider a generic perturbation by a cubic (1)-form and solve the corresponding Tangential Center Problem.
	
	The paper is organized as follows. In Section~\ref{sec:prel}, we introduce the necessary notation and formulate the Tangential Center Problem. The main results of this work are established in Section~\ref{main-results}. The methodology and the proposed algorithm to tackle the Tangential Center Problem are given in Section~\ref{sec:3}. In Section~\ref{sec:proofs-quad}, we analyse the Tangential Center Problem for quadratic Hamiltonian differential equations. Finally,  in  Section~\ref{sec:proofs-cubic}, we analyse the Tangential Center Problem for a family of cubic  Hamiltonian differential equations. For the quadratic and the cubic families we also study the relation of the solutions of the Tangential Center Problem with the intersections of the families of Hamiltonian centers with other families of centers. 
	
	\section{Notation and problem formulation}\label{sec:prel}
	
	In order to state the main results of this work, we first introduce the necessary notation and formulate the main problems addressed in this paper. 
	
	Fix a polynomial $H \in \mathbb{R}_{n+1}[x,y]$ and suppose that it has a nondegenerate minimum at the 
	origin $\mathbf{0}:=(0,0)\in\R^2$, and that $H(\mathbf{0}) = 0$. Then, the associated {\it Hamiltonian differential equation}
	\[
	dH =H_x\, dx+H_y\, dy= 0,
	\] 
	where $H_x$ and $H_y$ denote the partial derivatives of $H$, has a nondegenerate center at the origin,  surrounded by 
	a continuous family ${ \{ \gamma_h \mid h \in (0, \hat h) \} }$
	of {\it cycles} (periodic orbits) of $dH=0$, parametrized by the energy level $h$ of $H$ and defined on a maximal open interval $(0,\hat h)\subset \R^{+}$.  This family of cycles foliates the {\it period annulus $\mathcal{A}$ of the origin}, that is,
	$\mathcal{A}=
	\cup_{h \in (0, \hat h)}\gamma_h.$

	Denote by $\varOmega^1_n[x,y]$ the {\it space of polynomial $1$-forms of degree at most $n$}. Given
	$\omega \in \varOmega^1_n[x,y]$ 
	we can consider the following two constructions. On the one hand, for a small real parameter 
	$\varepsilon \in \R$,  the {\it perturbed Hamiltonian differential equation} is
	\begin{equation}
		\label{eq:perturbed-r-gen}
		dH + \varepsilon \msp \omega = 0.
	\end{equation}
	
	On the other hand, the {\it Abelian integral defined by $H$ and $\omega$} is
	\begin{equation}
		\label{eq:Abelian-integ-gen}
		I\colon (0,\hat h) \longrightarrow \R,
		\quad
		h \longmapsto \oint_{\gamma_h} \omega,
	\end{equation}
	which is an analytical real-valued function. These two objects are related to our aim: to study the persistence of the center of the Hamiltonian differential equation $dH=0$ under the perturbation $\varepsilon \omega$.

	Henceforth, we consider only $1-$forms $\omega\in \varOmega^1_n[x,y]$ for which the origin remains a singular point of equation \eqref{eq:perturbed-r-gen}. We denote by $\varOmega^1_n(\mathbf{0})$ the subset of $1$-forms $\omega\in \varOmega^1_n[x,y]$ that satisfy this condition.
	
	A usual tool to study how the perturbation $\varepsilon \omega$ modifies the foliation defined by $dH=0$, close to the origin, is the {\it displacement function associated with equation \eqref{eq:perturbed-r-gen}}:
	$$
	L\colon (-\ep_0,\ep_0) \times (0,h^*)
	\longrightarrow \R,
	\quad
	(\varepsilon, h) \longmapsto  \mathcal{P}(\varepsilon, h)-h,
	$$
	where $\ep_0>0$, $0 < h^* \leq \hat{h}$ and $\mathcal{P}(\varepsilon, h)$ is the first return map. Since $\mathcal{P}(\varepsilon, h)$ is an analytic function, the displacement function can be written as the power series
	$$
	L(\varepsilon, h) = \varepsilon M_1(h) + \varepsilon^2 M_2(h) + \cdots + \varepsilon^k M_k(h) + O(\varepsilon^{k+1}).
	$$
	The coefficient $M_i(h)$ is  the $i$-th {\it Poincar\'{e}--Pontryagin--Melnikov (PPM) function}. 
	
	Since we assume that $H$ is fixed and that we know which 1-form $\omega$ will be used for the perturbation, we have not made explicit the dependence on $H$ and $\omega$ in $I(h)$ and $L(\varepsilon,h)$.
	
	By construction, each zero of 
	$L(\varepsilon,h)$ in the variable $h$ is in correspondence with a cycle of 
	$dH+\varepsilon \msp \omega=0$. Hence, if the displacement function $L(\varepsilon, h)$ vanishes identically for small enough values of $h$, then
	the perturbed Hamiltonian differential equation $dH+\varepsilon \msp \omega=0$ has a center at the origin for each $\varepsilon$ small enough. That means that
	the center of $dH=0$ persists under the perturbation.

	Given a Hamiltonian differential equation ${dH=0}$ of degree $n$ with a nondegenerate center at the origin, the Infinitesimal Center Problem  consists in determininig the set of 1-forms $\omega \in \varOmega^1_n(\mathbf{0})$ for which there exist  $h^*>0$ and $\varepsilon^*>0$  such  that $L(\varepsilon,h)\equiv 0$ 
	for all $ h\in(0,h^*),\ 
	\varepsilon\in(-\varepsilon^*,\varepsilon^*).$
	
	According to the expansion in power series of $L(\varepsilon,h),$  the persistence of a center generally requires
	$$
	M_1(h)\equiv M_2(h)\equiv \cdots \equiv M_k(k)\equiv \cdots\equiv 0.
	$$
	The classical Poincaré--Pontryagin formula says that the first-order PPM function $M_1(h)$ is given by the Abelian integral \eqref{eq:Abelian-integ-gen}. Hence, the first step towards the solution of the Infinitesimal Center Problem is to solve  the following problem.
	
	\medskip
	\noindent{\bf Tangential Center Problem.}
	Given a Hamiltonian differential equation $dH=0,$ of degree $n$, with a nondegenerate center at the origin,
	to determine the set
	$$
	\TCL{n}:=
	\left\{ \omega \in \varOmega^1_n(\mathbf{0}) \, \mid \, \text{ exists } h^*>0 \text{ such that } I(h)= 0 \text{  for all } h \in (0, h^*)\right\}.
	$$
	The adjective {\it tangential} reflects  the first-order (linearized) condition for the persistence of the center under the perturbation. 
	
	Observe that
	the inclusion $
	\TCL{n} \subset \varOmega^1_n(\mathbf{0})
	$ is proper since, for example, for the 1-form $x\, dy$ the corresponding Abelian integral $I(h)$ represents the area enclosed by the cycle $\gamma_h$ which clearly is nonzero for any $0<h<\hat{h}$.

	To simplify the statement of the main results, we introduce the following notation.
	We denote the {\it subspace of exact polynomial 1-forms} by
	\[
	\varOmega^1_{\mathrm{e}}
	=\{\omega\in \varOmega^1_n(\mathbf{0}) \;|\; \omega=dF,\, F \text{ polynomial}\},
	\]
	and the {\it subspace of algebraically relatively exact polynomial $1$-forms} by
	\[
	\varOmega^1_{\mathrm{re}}
	=\{\omega\in \varOmega^1_n(\mathbf{0}) \;|\; \omega=dF+G\,dH,\, F,G \text{ polynomials}\}.
	\]
	
	If for some $h\in\mathbb{R}$, $H-h=\prod_{j=1}^m f_j$, with each $f_j$ irreducible and $f_j(0,0)\neq 0$, then {\it the space of  logarithmic $1$-forms associated with $f_j$}, is denoted by
	\[
	\varOmega^1_{\log}(f_j)
	=\left\{\omega\in \varOmega^1_n(\mathbf{0}) \;|\; \omega = \lambda_j (H-h) \frac{df_j}{f_j},\; \lambda_j\in\mathbb{R}\right\},\quad j=1,\ldots,m.
	\]
	
	Finally, if $H$ admits a global orientation-preserving $\mathbb{Z}_k$-symmetry fixing the origin, that is, an orientation-preserving diffeomorphism
	$\sigma\colon\mathbb{R}^2\to\mathbb{R}^2$  such that
	\[
	\sigma^k:=\underbrace{\sigma \circ \cdots \circ \sigma}_{\text{$k$-times}} =\mathrm{Id} 
	, \:
	\sigma^i\neq \mathrm{Id}\, \mbox{ for $1\leq i< k$},\:
	\sigma(\mathbf{0})=\mathbf{0}
	\quad
	\mbox{and}
	\quad
	H\circ\sigma = H,
	\]
	we define the {\it space of $\sigma$-balanced 
		polynomial $1$-forms} by
	\[
	\varOmega^1_{\sigma}
	=\left\{\omega\in \varOmega^1_n(\mathbf{0}) \colon\sum_{i=0}^{k-1}(\sigma^i)^*\omega=0\right\}.
	\]
	If instead $\sigma$ reverses the orientation, then 
	\[
	\varOmega^1_{\sigma}
	=\left\{\omega\in \varOmega^1_n(\mathbf{0}) \colon\sum_{i=0}^{k-1}(-1)^i(\sigma^i)^*\omega=0\right\}.
	\]

	\section{Main results}
	\label{main-results}
	
	Throughout this work, a polynomial is said to be in {\it canonical form} if its quadratic part is given by ${(x^2+y^2)/2}$,
	a form that every real polynomial with a nondegenerate minimum at the origin attains after a suitable linear change of coordinates.

	We  first consider the  Tangential Center Problem for quadratic Hamiltonian equations. To this end, we  use 
	the canonical forms 
	of cubic polynomials arising from the quadratic Hamiltonian equations given by Iliev \cite[p. 108]{Iliev}, who makes a refinement of the classification of quadratic centers given by Zoladek \cite{Zoladek}. It is usual to distinguish between the cubic generic case and a cubic degenerate situation commonly referred to as the Hamiltonian triangle (which is the intersection of the Hamiltonian and the Lotka--Volterra families).
	
	The cubic generic case is described by the polynomial
	\begin{equation}\label{Hamilt-q}
		H_N(x,y)=\frac{x^{2}+y^{2}}{2}
		+(x^{2}+y^{2})y
		+\frac{3x^{2}-y^{2}}{3}\,a y
		-\frac{x^{2}-3y^{2}}{3}\,b x,
	\end{equation}
	where $a,b\in\mathbb{R}$ are real parameters (we have renamed the parameters used by Iliev), while
	the Hamiltonian triangle corresponds to the polynomial
	\begin{equation}\label{eq:HamT}
		H_T(x,y)=\frac{x^2+y^2}{2}+x^2y-\frac{y^3}{3},
	\end{equation}
	
	Our main result concerning quadratic Hamiltonian equations is the following, 
	which will be proved in Section~\ref{sec:proofs-quad}.

	\begin{theo}\label{theo:mainqintro}
		Consider the quadratic Hamiltonian equation $dH_N=0$, associated with the generic cubic polynomial $H_N(x,y)$ given in \eqref{Hamilt-q}, and the quadratic Hamiltonian equation $dH_T=0$, associated with the polynomial
		$H_T$ given in \eqref{eq:HamT}. 
		
		\begin{enumerate}
			\item If  $b\neq 0$, then the Tangential Center Problem for $dH_N=0$ has the solution:
			\[
			\TCL{2} = \varOmega^1_{e}.
			\] 
			
			\item If  $b=0$, then $H_N(x,y)$ has a orientation-reversing $\mathbb{Z}_2$-symmetry $\sigma$, which is a reflection on the $y$-axis; and the Tangential Center Problem for  $dH_N=0$ has the solution:
			\[
			\TCL{2} = \varOmega^1_{e}  \oplus \varOmega^1_{\sigma}.
			\]
			\item The polynomial $H_T(x,y)$ has an orientation-reversing $\mathbb{Z}_2$-symmetry $\sigma$, which is a reflection on the $x$-axis, and a orientation-preserving $\mathbb{Z}_3$-symmetry $\tau$, which is a  $2\pi/3$-rotation; and the Tangential Center Problem for  $dH_T=0$ has the solution:
			\[
			\TCL{2} = \varOmega^1_{e}  \oplus \varOmega^1_{\sigma} \oplus \varOmega^1_{\tau}.
			\]
		\end{enumerate}
	\end{theo}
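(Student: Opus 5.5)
We will prove both inclusions separately. The inclusions $\supseteq$ are soft. For an exact form $dF$ we have $\oint_{\gamma_h}dF=0$ directly. For a symmetric $H$, the relevant cycles $\gamma_h$ near the origin are ovals invariant under the symmetry, since the symmetry fixes $\mathbf{0}$ and preserves $H$. For the orientation-reversing reflection $\sigma$ we get $\oint_{\gamma_h}\sigma^*\omega=-\oint_{\gamma_h}\omega$. Hence $\omega+\sigma^*\omega=0$ forces nothing new, but averaging shows the integral is unchanged. More precisely, $\oint_{\gamma_h}\omega=\tfrac12\oint_{\gamma_h}(\omega-\sigma^*\omega)$, so if $\omega-\sigma^*\omega=0$ (the balanced condition with sign $(-1)^i$) the integral vanishes. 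Similarly, for the rotation $\tau$ we have $\oint_{\gamma_h}\omega=\tfrac13\oint_{\gamma_h}(\omega+\tau^*\omega+(\tau^2)^*\omega)=0$. By linearity, sums of these forms also lie in $\TCL{2}$.

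The substantive part is $\subseteq$. We will work in the finite-dimensional space $\varOmega^1_2(\mathbf{0})$ and reduce modulo $\varOmega^1_e$ and the relatively exact forms. By the Gavrilov/Françoise-type reduction, every $\omega\in\varOmega^1_2$ satisfies $\omega=dF+G\,dH+\sum_i c_i\,\omega_i$ for a small basis $\omega_i$ of the Brieskorn/Petrov module. For quadratic Hamiltonians, the Petrov module over $\mathbb{R}[h]$ has rank $2$, generated by $x\,dy$ and one further form such as $x^2dy$ or $xy\,dy$. Then
\[
I(h)=p_0(h)\oint_{\gamma_h}x\,dy+p_1(h)\oint_{\gamma_h}\omega_1 ,
\]
with polynomial coefficients $p_0,p_1$ whose coefficients depend linearly on those of $\omega$. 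For degree-two $\omega$, these $p_j$ have degree at most zero or one. Two conditions are then needed:
\begin{itemize}
\item linear independence over $\mathbb{R}(h)$ of the two generating integrals, established either through their Picard--Fuchs system or through their expansions at $h=0$;
\item the degree bound on the $p_j$.
\end{itemize}
Given these, $I\equiv0$ becomes finitely many linear equations on the coefficients of $\omega$. In practice we will expand $I(h)=\sum_k I_k h^k$ at $h=0$ (for instance in polar coordinates, or via residue/normal-form formulas), computing enough coefficients $I_1,\dots,I_N$ so that their linear span in the coefficients of $\omega$ has the expected rank. The first $N$ coefficients then suffice, with $N$ determined by the degree bound from the Petrov decomposition.

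Case by case, we will compute the kernel $K$ of this linear system and compare dimensions. For case (1), $b\neq0$, we will show that $K$ equals $\varOmega^1_e\cap\varOmega^1_2(\mathbf{0})$ by showing that $\dim K=\dim\varOmega^1_e$; for quadratic forms this dimension is $4$, coming from $dF$ with $F$ cubic without linear part. In cases (2) and (3), we will exhibit the symmetric subspaces explicitly:
\begin{itemize}
\item with $\sigma(x,y)=(-x,y)$, the space $\varOmega^1_\sigma$ consists of forms whose coefficients have the appropriate parities in $x$;
\item we will check that the sums are direct by verifying that the intersections are zero;
\item we will show that $\dim K$ equals the sum of the dimensions.
\end{itemize}
Because the generic case $b\neq0$ still contains special sub-cases (for instance $a$ values where extra degeneracy occurs, or $H_N$ coinciding with $H_T$ up to a linear change), we will track where the rank of the linear system drops as a polynomial condition in $(a,b)$. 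We will then verify that the rank drops only on $b=0$ or on the triangle case.

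The main obstacle is this rank analysis uniform in parameters. We need the exact number of Taylor coefficients required and a proof that the resulting determinant, or minors, vanish only at $b=0$. I would first try to compute $I_1,\dots,I_4$ symbolically and factor the relevant minors. If spurious factors in $a$ appear, I would handle those parameter values separately, using the Picard--Fuchs independence argument to confirm that no additional tangential centers occur there.
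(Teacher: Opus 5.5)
Your overall strategy is the paper's: necessity comes from finitely many Taylor coefficients of $I$ at $h=0$, and sufficiency comes from averaging over the symmetries. Your $\supseteq$ argument is exactly Proposition~\ref{prop:suficientes}.

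\textbf{The identification step fails.} The sums in the statement are not direct, so ``verify that the intersections are zero'' and ``$\dim K$ equals the sum of the dimensions'' cannot be carried out.
\begin{itemize}
\item For the orientation-reversing $\sigma(x,y)=(-x,y)$, balanced means $\sigma^*\omega=\omega$. Every $dF$ with $F$ even in $x$ is therefore $\sigma$-balanced, so $\varOmega^1_e\cap\varOmega^1_\sigma\supseteq\langle d(x^2),d(y^2),d(x^2y),d(y^3)\rangle$.
\item You also have $\dim\varOmega^1_e=7$, not $4$: $F$ has three quadratic and four cubic monomials. Then $\dim\varOmega^1_e+\dim\varOmega^1_\sigma=7+5$ already exceeds $\dim\varOmega^1_2(\mathbf{0})=10$.
\item For $H_T$ the overlap persists even modulo exact forms. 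One has $xy\,dx+\tau^*(xy\,dx)+(\tau^2)^*(xy\,dx)=\tfrac14\,d(3x^2y-y^3)$, so $[xy\,dx]$ already lies in the image of $\varOmega^1_\tau$. Hence $\varOmega^1_e+\varOmega^1_\tau$ is by itself $9$-dimensional.
\end{itemize}
The repair is to read $\oplus$ as a plain sum and work in the $3$-dimensional quotient $\varOmega^1_2(\mathbf{0})/\varOmega^1_e$, with basis $[y\,dx],[xy\,dx],[y^2dx]$. There the images of the symmetric subspaces are $\langle[xy\,dx]\rangle$ when $b=0$ and $\langle[xy\,dx],[y^2dx]\rangle$ for $H_T$. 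You must then show that the kernel of the truncated system is $0$, $\langle[xy\,dx]\rangle$, and $\langle[xy\,dx],[y^2dx]\rangle$ in the three cases. This is how the paper argues.

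\textbf{The Petrov-module layer is both incorrect and unnecessary.} For a cubic $H$ whose top-degree part has three distinct linear factors (e.g.\ $H_T$), the Petrov module is free of rank $\mu=4$, not $2$. The two-generator form $p_0I_0+p_1I_1$ with $\deg p_0\le 1$, $\deg p_1=0$ is a Horozov--Iliev-type statement about integrals of \emph{quadratic} forms only. The $\mathbb{R}(h)$-independence you want also fails exactly in the exceptional cases. For example, $\oint_{\gamma_h}x^2dy\equiv0$ when $b=0$, since $x^2dy$ is $\sigma$-balanced.

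None of this is needed for $\TCL{2}\subseteq\cdots$. Vanishing of finitely many Taylor coefficients is a necessary condition, and once those coefficients cut the quotient down to the symmetric candidates, your $\supseteq$ part closes the argument. No bound on $N$ is required.

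Your ``main obstacle'' also disappears. In the normalization $H=h^2/2$ the paper finds
\[
K_2=-\pi\alpha_{01},\quad K_4=\pi(2\alpha_{02}+c_1\alpha_{01}),\quad K_6=\pi\bigl(c_2\alpha_{01}+c_3\alpha_{02}+\tfrac{10}{3}b\,\alpha_{11}\bigr).
\]
This system is triangular, and its only pivot that can vanish is $\tfrac{10}{3}\pi b$, so no spurious factors in $a$ arise. For $H_T$, $K_2=0$ forces $\alpha_{01}=0$, and the two remaining classes are killed by $\sigma$ and $\tau$ respectively.
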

	
	After the quadratic equation case, we  study cubic Hamiltonian ones. Concretely, in Section~\ref{sec:proofs-cubic}, we study the family of Hamiltonian equations with a triangle quadratic part, and a cubic part  with odd exponents, namely, the family given by $dH=0$ with 
	\begin{equation}\label{eq:Hamce0}
		H(x,y)=\frac{x^2+y^2}{2}+x^2y-\frac{y^3}{3}+ax^4+bx^2y^2+cy^4.
	\end{equation}
	Note that $H(-x,y)=H(x,y)$ for any $(x,y)$, that is, the entire family is symmetric with respect to the $y$ axis. Denote by $\sigma$  that symmetry. 
	
	\begin{theo}\label{theo:cubic}
		Consider the cubic Hamiltonian equation $dH=0$, associated with the quartic polynomial $H(x,y)$ given in \eqref{eq:Hamce0}, which always has an orientation-reversing $\mathbb{Z}_2$-symmetry $\sigma$, which is a reflection with respect to the $y$-axis. The solution of the Tangential Center Problem for $dH=0$ is as follows:
		\begin{enumerate}
			\item If $c=a$, $b=2a$, then $H$ has an orientation-preserving $\mathbb{Z}_3$-symmetry $\tau$, which is a $2\pi/3$-rotation, and
			\[ 
			\TCL{3} =\varOmega^1_{e}  \oplus \varOmega^1_{\sigma}\oplus \varOmega^1_{\tau}.
			\]
			
			\item If $c\neq a$, $b=c=0$, then
			\[
			\TCL{3}=\varOmega^1_{re}  \oplus \varOmega^1_{\sigma}.
			\]
			
			\item If $c\neq a$ and 
			\[a+b+c=18 \, b^{2} c + 72 \, b c^{2} + 72 \, c^{3} - b^{2} - 2 \, b c + 8 \, c^{2}=0,\]
			then 
			$H(x,y) - k(a,b,c) = f_1 f_2 f_3,$ with $k$ a certain constant depending on $a,b,c$ and $f_1,f_2$ linear and $f_3$ quadratic, and
			\[
			\TCL{3}=\varOmega^1_{e}  \oplus \varOmega^1_{\sigma} \oplus \varOmega^1_{\log}(f_1)\oplus \varOmega^1_{\log}(f_2).
			\]
			\item In any other case, 
			\[
			\TCL{3} =\varOmega^1_{e}  \oplus \varOmega^1_{\sigma}.
			\]
		\end{enumerate}
	\end{theo}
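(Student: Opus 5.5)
The proof splits into a sufficiency part and a necessity part. Throughout, the center is nondegenerate, so $I(h)\equiv0$ near $0$ is equivalent to the vanishing of all coefficients of the expansion of $I$ at $h=0$. Equivalently, for a suitable choice of basis, $\omega$ is annihilated by a finite set of linear functionals on the finite-dimensional space $\varOmega^1_3(\mathbf{0})$.

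\textbf{Sufficiency.} We check that each listed subspace lies in $\TCL{3}$.
\begin{itemize}
\item Exact forms $dF$ and relatively exact forms $dF+G\,dH$ integrate to zero over $\gamma_h$.
\item For $\omega\in\varOmega^1_\sigma$, the reflection $\sigma$ maps $\gamma_h$ to itself reversing orientation, so $\oint_{\gamma_h}\omega=\tfrac12\oint_{\gamma_h}(\omega-\sigma^*\omega)=0$. The $\mathbb{Z}_3$ rotation $\tau$, when $c=a$ and $b=2a$, is handled the same way; in that case $ax^4+2ax^2y^2+ay^4=a(x^2+y^2)^2$ is rotation invariant and $x^2y-y^3/3$ is $\tau$-invariant.
\item In case (3), for a logarithmic form $(H-k)\,df_j/f_j$ the integral becomes $(h-k)\oint_{\gamma_h} d\log f_j$. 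This vanishes because $f_j$ does not vanish inside $\gamma_h$ for small $h$ (recall $f_j(\mathbf{0})\neq0$), so $\log|f_j|$ is single valued there.
\item In case (2), $b=c=0$, we have $H=H_T+ax^4$ and we must identify the extra relatively exact forms of degree $3$. These are those $G\,dH$ with $\deg G\le0$ and, more to the point, combinations $dF+G\,dH$ of degree $\le3$ that are not exact. We exhibit them explicitly.
\end{itemize}

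\textbf{Necessity.} We follow the algorithm of Section~\ref{sec:3}.
\begin{enumerate}
\item Parametrize $\varOmega^1_3(\mathbf{0})$ as $\omega=P\,dx+Q\,dy$ with $P,Q$ of degree $\le3$ and no constant terms.
\item Reduce modulo exact forms to $\omega\sim R\,dx$ or to $\int\!\!\int_{\{H\le h\}}(Q_x-P_y)\,dx\,dy$, so that only the divergence polynomial $D=Q_x-P_y$ (degree $\le2$, six coefficients) matters modulo $\varOmega^1_e$.
\item Compute the first coefficients of $I(h)=\int\!\!\int_{H\le h}D$ in powers of $h$ by normal-form or polar expansion: $I(h)=\sum_k c_k h^{k}$, where each $c_k$ is linear in the coefficients of $D$ and polynomial in $(a,b,c)$.
\item Since $\sigma$-balanced forms correspond to $D$ odd in $x$, only the even-in-$x$ part $D=d_0+d_1y+d_2x^2+d_3y^2$ remains, a four-dimensional space. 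The conditions $c_2=\dots=c_5=0$ give a $4\times4$ linear system whose matrix $A(a,b,c)$ has polynomial entries.
\item If $\det A\neq0$, then $D$ even in $x$ must vanish, which gives case (4).
\item The zero set of $\det A$ together with the rank drops of $A$ must be analysed. We expect it to decompose into the components $\{c=a,\ b=2a\}$, $\{b=c=0\}$ and the curve $a+b+c=0=18b^2c+\dots$. Each extra kernel vector must then match the dimension of the extra subspace: $\varOmega^1_\tau$, the additional relatively exact directions, or the two logarithmic forms. We take higher coefficients $c_k$ to confirm that the rank does not drop further.
\item In case (3) we verify the factorization $H-k=f_1f_2f_3$ directly. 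We impose that $H-k$ vanish on a pair of lines symmetric under $x\mapsto -x$, then eliminate, recovering the stated conditions.
\end{enumerate}

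\textbf{Main obstacle.} The hardest step is the rigorous case analysis of the degeneracy locus of $A$. This means computing enough coefficients $c_k$ symbolically and factoring the resulting determinants and minors, for example with Gröbner bases or resultants. The delicate part is showing that, on each degenerate stratum, the kernel is exactly spanned by the exhibited forms. I would first try to get this by matching dimensions, and otherwise use additional Melnikov coefficients to cut the kernel down.
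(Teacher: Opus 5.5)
Your framework is the paper's. Modulo $\varOmega^1_e\oplus\varOmega^1_\sigma$ only the even-in-$x$ divergence $d_0+d_1y+d_2x^2+d_3y^2$ survives, which is exactly the paper's reduction to $(Ay+By^2+Cx^2y+Dy^3)\,dx$. This is followed by finitely many Melnikov coefficients and a vanishing mechanism on each stratum. But your picture of the degeneracy locus is wrong, and with it the role you give to higher coefficients. The first four nonzero coefficients give, in turn:
$A=0$; $C=-3D$; $24(a-c)D=\tfrac{16}{3}(3a+b-5c)B$; and $K_8\propto B\,T(a,b,c)/(c-a)$.
So your $4\times4$ determinant is a constant multiple of the polynomial $T(a,b,c)$; one checks $T(a,b,a)=(b-2a)^2$. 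Its zero set is a surface, not the union of $\{c=a,\,b=2a\}$, $\{b=c=0\}$ and the curve of case (3). At a generic point of $\{T=0\}$ the first four conditions leave a one-dimensional kernel. Only further coefficients cut $\{T=0\}$ down to the components of the statement; the paper goes up to order $16$ and takes minimal primes. These coefficients are the core of the necessity proof, not a confirmation step.

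The sufficiency half has two holes. In case (2) you never exhibit the relatively exact form. Your remark about $\deg G\le 0$ points the wrong way, since the extra direction needs $\deg G=1$ and $\deg F=5$:
\[
\Bigl(y^2-\tfrac23y(y^2-3x^2)\Bigr)dx=(2H-x^2-2ax^4)\,dx=d\Bigl(2xH-\tfrac{x^3}{3}-\tfrac{2a}{5}x^5\Bigr)-2x\,dH.
\]
In case (3), matching the kernel against ``the two logarithmic forms'' fails in two ways. First, $\sigma^*$ exchanges the two forms, so only their difference is new modulo $\varOmega^1_e\oplus\varOmega^1_\sigma$. Parametrize the curve by $a=sc$, $c=\tfrac{s^2-9}{18(s-1)^2}$. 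For $s<0$ the lines are complex conjugate, and only $\tfrac{1}{\sqrt s}(\eta_1-\eta_2)$ is a real form. Second, at $s=0$, i.e.\ $(a,b,c)=(0,\tfrac12,-\tfrac12)$, the two lines coalesce into the line $y=-1$, which consists of critical points, and $H-\tfrac13=-f_3(1+y)^2$ with $f_3$ the quadratic factor. Both logarithmic forms then equal $(H-\tfrac13)\,dy/(1+y)$. Its differential $x(1+y)\,dx\wedge dy$ is odd in $x$, so they lie in $\varOmega^1_e\oplus\varOmega^1_\sigma$. Yet $\omega_4=\bigl(y^2+\tfrac43y(3x^2-y^2)\bigr)dx$ is still a tangential center, and dimension matching leaves this kernel vector unexplained. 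What works here: $dH=(1+y)\theta$ with $\theta=x(1+y)\,dx+(y-2y^2+x^2)\,dy$. Then $\oint_{\gamma_h}x\theta=\oint_{\gamma_h}\tfrac{x}{1+y}\,dH=0$, and $\omega_4\equiv-2x\theta$ modulo exact forms; this is the paper's $\widetilde H_0\,d(x/f_{21})$ argument. At this point the logarithmic description in the statement itself degenerates. The correct answer is $\varOmega^1_e\oplus\varOmega^1_\sigma\oplus\langle\omega_4\rangle$, which is what your proof must establish there.
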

	\begin{rema}
		Observe that the normal forms \eqref{Hamilt-q} and \eqref{eq:HamT} show that, for quadratic Hamiltonian differential equations, any center at the origin is necessarily nondegenerate. Hence, the nondegeneracy assumption, which has been maintained throughout this paper, is not restrictive in the quadratic case. Moreover, the family of cubic Hamiltonian equations defined by \eqref{eq:Hamce0} that we have chosen also satisfies this assumption.
	\end{rema}

	\begin{rema} 
		Observe that case (2) of the previous result is the only one in which the relatively exact 1-forms arise. Recall that the perturbation has the same degree as the unperturbed equation. If perturbations of higher degree were allowed, relatively exact 1-forms would arise more frequently. Nevertheless, despite this degree restriction, relatively exact 1-forms do appear in this case, as the symmetry causes the higher-degree terms to cancel out.
	\end{rema}

	\section{Methodology}\label{sec:3} 
	
	In this section, we present the computational methodology underlying our approach, together with the proposed algorithm to tackle the Tangential Center Problem.
		
	Let $H(x,y)$ be a polynomial with a nondegenerate minimum at the origin $\mathbf{0}$ such that $H(\mathbf{0}) = 0$. Let $\omega=P(x,y)\,dx+Q(x,y)\,dy \in \Omega_n^1(\mathbf{0})$ and consider the Abelian integral \eqref{eq:Abelian-integ-gen}, defined by $H$ and $\omega$, that is, 
	\[
	I(h)=\oint_{\gamma_h}\omega.
	\]
	Recall that we are interested in determining the conditions on $\omega$ under which there exists $h^*>0$ such that
	$I(h)= 0$ for all $h\in(0,h^*)$.
	
	To simplify the computations of the Abelian integral, we reparameterize the energy levels by writing 
	\[
	H(x,y)=\frac{h^2}{2},
	\]
	instead of $H(x,y)=h$. Since this change merely reparameterizes the family of cycles of the period annulus $\mathcal{A}$ of the origin, we retain the notation $\gamma_h$ for the cycles surrounding it. 
	
	Observe that $I(h)\equiv0$ whenever $\omega$ is exact. Therefore, it suffices to study \(\omega\) modulo exact differential forms. In particular, it is always possible to find a representative of the equivalence class \([\omega]\) of the form $F(x,y)\,dx$, since \(\omega = F(x,y)\,dx + d\varphi\), where 
	\[
	\varphi(x,y) := \int_{0}^{y} Q(x,t)\,dt,\quad F(x,y) := P(x,y) - \int_{0}^{y} \frac{\partial Q}{\partial x}(x,t)\,dt.
	\]
	That is, the quotient space of polynomial $1$--forms modulo exact forms admits a canonical basis given by
	\begin{equation}\label{basispetrov}
		\mathcal{B}_n = \left\{ x^i y^j\,dx \;\middle|\; 0 \leq i \leq n,\; 1 \leq j \leq n,\; 1 \leq i+j \leq n \right\}.
	\end{equation}
	Thus, every equivalence class modulo exact forms admits a unique representative in the span of the elements of \eqref{basispetrov}. Consequently, it is enough to consider perturbations given by arbitrary linear combinations of these basic polynomial $1$--forms. Restricting our attention to this canonical set of representatives, we can now describe the methodology that will be used to establish our main results.

	\begin{prop}\label{prop-base-of-algo}
		Let $H(x,y;\bm{\beta})$ be a family of polynomials of degree $n+1$, parameterized by a finite (possibly empty) set of parameters $\bm{\beta}=(\beta_1,\ldots, \beta_s)$, each with a nondegenerate minimum at the origin and in canonical form.
		Consider $\omega \in \Omega_n^1(\mathbf{0})$. Then, the Abelian integral defined by $H(x,y;\bm{\beta})$ and $\omega$
		admits a convergent power series
		expansion
		\[
		I(h)=\sum_{\mu=2}^{\infty} K_\mu\,h^\mu,
		\]
		where each $K_\mu$ depends linearly
		in the coefficients of $\omega$ and 
		polynomially on $\bm{\beta}$.
		Moreover,
		there is $N\in\mathbb{N}$ depending only on $n$ such that $K_2=K_3=\cdots=K_N=0$ implies $I(h)\equiv0.$
	\end{prop}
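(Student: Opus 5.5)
We reduce the cycles of $H=h^2/2$ to circles by an analytic change of coordinates, expand the integrand in powers of $h$, and bound how many leading coefficients must vanish using a Noetherianity argument on polynomial ideals.

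\emph{Step 1: normalizing the cycles.} Since $H$ is in canonical form, $H=\frac{x^2+y^2}{2}+O(3)$, and the Morse lemma gives an analytic near-identity change of variables $(x,y)=\Phi(u,v)$ with $H\circ\Phi=\frac{u^2+v^2}{2}$. Its Taylor coefficients are polynomials in the coefficients of $H$, hence in $\bm{\beta}$; this holds because the Morse normalization can be computed order by order, solving a linear system with the fixed invertible operator coming from the quadratic part. Better still, we use polar-type coordinates $x=r\cos\theta$, $y=r\sin\theta$ and solve $H(r\cos\theta,r\sin\theta)=h^2/2$ for $r=r(\theta,h)=h+\sum_{k\ge2}\rho_k(\theta)h^k$. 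The implicit function theorem applies after dividing by $r^2$: we have $\frac{r^2}{2}(1+rA(\theta,r))=h^2/2$, so $r\sqrt{1+rA}=h$. Each $\rho_k(\theta)$ is a trigonometric polynomial whose coefficients are polynomial in $\bm{\beta}$.

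\emph{Step 2: expanding the integral.} We take $\omega=F(x,y)\,dx$ with $F$ in the span of \eqref{basispetrov}. Substituting the parametrization gives
\[I(h)=\int_0^{2\pi}F(r\cos\theta,r\sin\theta)\,\partial_\theta(r\cos\theta)\,d\theta .\]
Expanding in $h$, every coefficient is a finite sum of integrals of trigonometric polynomials, so each $K_\mu$ is linear in the coefficients of $F$ and polynomial in $\bm{\beta}$. Convergence for small $h$ follows from the analyticity of $r(\theta,h)$ uniformly in $\theta$. The constant and linear terms vanish: $F$ has no constant term, so the integrand is $O(h^2)$. Hence the sum starts at $\mu=2$. (The $h^2$ term is the linear part of $F$ integrated over the circle.)

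\emph{Step 3: the uniform bound $N$.} This is the main obstacle. We view the coefficients $c$ of $\omega$ together with $\bm{\beta}$ (in fact all coefficients of $H$ of degree $3,\dots,n+1$) as variables in a polynomial ring $R$. Each $K_\mu$ is then a polynomial in $R$. The ideals $J_m=(K_2,\dots,K_m)$ form an ascending chain, so by Hilbert's basis theorem the chain stabilizes at some $N$. Thus every $K_\mu$ with $\mu>N$ lies in $J_N$, and $K_2=\dots=K_N=0$ forces $K_\mu=0$ for all $\mu$, hence $I\equiv0$ near $0$. By analyticity, $I$ then vanishes on the whole $(0,h^*)$. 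Since we work with the universal family of all $H$ of degree $n+1$ in canonical form, $N$ depends only on $n$, and it specializes to any subfamily $H(x,y;\bm{\beta})$ obtained by a polynomial substitution of parameters. The delicate point is exactly this: making sure the ring is the universal one, so that $N$ is independent of the particular family. The argument does not give $N$ explicitly; an effective $N$ would require Gröbner basis computations, which is presumably what the algorithm does in practice.
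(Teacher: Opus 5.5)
Your proposal is correct and follows essentially the same route as the paper. Both arguments use the polar parametrization $r(h,\theta)=h+\sum_{\mu\ge 2}r_\mu(\theta)h^\mu$, whose coefficients are trigonometric polynomials depending polynomially on the parameters, then expand the integrals of the monomial forms termwise, and finally apply Hilbert's basis theorem to the ideal generated by the $K_\mu$ in the polynomial ring of the coefficients of $H$ and $\omega$. Your implicit-function argument via $r\sqrt{1+rA}=h$, and your ascending chain over the universal ring (which is what actually makes $N$ depend only on $n$), are slightly more careful versions of the paper's double-point-branch argument and its appeal to finite generation.
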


	\begin{proof}
		Since the  origin is a nondegenerate center of
		$dH=0$,  we may introduce 
		polar coordinates $(r,\theta)$ 
		in which $H(x,y;\bm{\beta})$ writes as
		\[
		H(r,\theta;\bm{\beta})=\frac{r^2}{2}+C_3r^3+\cdots+C_{n+1}r^{n+1},
		\]
		where each $C_i$ is a trigonometric polynomial in $\theta$ whose coefficients are polynomials in $\bm{\beta}$.

		For $h>0$ sufficiently small and $\theta$ fixed, the equation 
		$H(r,\theta)=h^2/2$ defines a real algebraic curve in the $rh$-plane with an ordinary double point at the origin. Hence,  in a neighbourhood of the origin, the algebraic curve has two analytic branches, one of them belonging to the first quadrant, which implies that $r$ can be (locally) analytically expressed  in terms of $h$ as a power series\footnote[2]{The branches of the algebraic curve $H(r,\theta;\bm{\beta})=h$ can  be parameterized via Puiseux series involving  powers of $h^{\frac{1}{2}}$. To avoid these fractional power we have chosen the reparameterization of the energy levels of $H.$} of the form
		\begin{equation}
			\label{eq:r-series-proof}
			r(h,\theta)
			=
			h+\sum_{\mu=2}^{\infty} r_{\mu}(\theta)h^{\mu}.
		\end{equation}
		where the coefficients $r_{\mu}(\theta)$, $\mu=2,\ldots$, can be obtained recursively by replacing $r$ in the equation $H(r,\theta)=h^2/2$ by the expression \eqref{eq:r-series-proof}, obtaining in particular that each coefficient $r_{\mu}(\theta)$ is a trigonometric polynomial whose
		coefficients depend polynomially on the parameters $\bm{\beta}$ of $H$. Therefore, since $\theta$ can vary from $0$ to $2\pi$, the cycle $\gamma_h$ admits
		a real-analytic parametrization of the form
		\begin{equation*}
			\label{eq:gamma-param}
			\varphi(\theta)=
			\big(r(h,\theta)\cos\theta, r(h,\theta)\sin\theta \big).
		\end{equation*}

		Now, let $\omega\in\Omega^1_n(\mathbf{0})$. As we commented before, it is enough to consider monomial $1$--forms  like the ones in \eqref{basispetrov}. That is, we can always write
		\[
		\omega=\omega_{ex}+\sum_{i+j=1}^{n}\alpha_{ij}\, \omega_{ij},
		\quad 0 \leq i \leq n,\; 1 \leq j \leq n,
		\]
		where $\omega_{ex}\in\Omega^1_{e}$, $\alpha_{ij}\in\mathbb{R}$, and $\omega_{ij}=x^iy^jdx$. Using the parametrization $\varphi(\theta)$
		we obtain
		\[
		\oint_{\gamma_h}\omega_{ij}
		=
		\int_0^{2\pi}
		r^{i+j}(h,\theta)
		\cos^i\theta\,\sin^j\theta\,
		\bigl(r_\theta(h,\theta)\cos\theta-r(h,\theta)\sin\theta\bigr)
		\,d\theta.
		\]
		
		Substituting the series expansion \eqref{eq:r-series-proof} for $r(h,\theta)$ and
		$r_\theta(h,\theta)$ into this expression and expanding in powers of $h$, we
		obtain a convergent power series
		\[
		\oint_{\gamma_h}\omega_{ij}
		=
		\sum_{\mu=1}^{\infty} K_{ij,\mu}\,h^{\mu},
		\]
		where each coefficient $K_{ij,\mu}$ is obtained by integrating a trigonometric
		polynomial in $\theta$. Consequently, each $K_{ij,\mu}$ is a polynomial function on
		the parameters $\beta$ of $H$, if they exist, and depends linearly on the coefficients of the $1$--form
		$\omega$.
		
		By linearity, the same conclusion holds for the Abelian integral
		\[
		I(h)=\oint_{\gamma_h}\omega=\oint_{\gamma_h}\left(\omega_{ex}+\sum_{i+j=1}^n\alpha_{ij}\omega_{ij}\right)=\sum_{i+j=1}^n\alpha_{ij}\oint_{\gamma_h}\omega_{ij},
		\]
		which therefore admits a convergent expansion
		\[
		I(h)=\sum_{\mu=2}^{\infty} K_{\mu}\,h^{\mu},
		\]
		where each coefficient $K_{\mu}$ depends linearly
		on the coefficients $\bm{\alpha}=\{\alpha_{ij}\}$ of $\omega$ and 
		polynomially on $\bm{\beta}$.
		
		Finally, consider the ideal $\mathcal{I}$ generated by the coefficients
		$\{K_{\mu}\}_{\mu\ge1}$ in the polynomial ring generated by the coefficients of $H$ and
		$\omega$. By the Hilbert Basis Theorem, this ideal is finitely generated. Hence,
		there exists an integer $N\ge 2$ such that if $K_2=K_3=\cdots=K_N=0$, then $K_{\mu}=0$ for all $\mu\ge 2$, and obviously $I(h)\equiv 0$. 
	\end{proof}

	\subsection{Algorithmic computation of the necessary conditions}
	
	We now provide an overview of the algorithm we are going to follow in order to characterize the $1$--forms for which $I(h)\equiv 0.$ The algorithm is based on the previous result, Proposition \ref{prop-base-of-algo}. 
	
	\noindent {\bf Input}: A family of polynomial functions \( H(x,y;\bm{\beta}) \) of degree \( n+1 \), depending on parameters \(\bm{\beta}=\{ \beta_1, \dots, \beta_s \} \) and \( \omega \in \Omega^1_n(\mathbf{0}) \) with coefficients $\bm{ \alpha}=\{\alpha_{ij}\}$. 
	
	\noindent {\bf Output:} A finite number of algebraic varieties in the parameters $\bm{\alpha}$, $\bm{\beta}$,  linear in $\bm{\alpha}.$
	
	\begin{enumerate}
		\item[Step $1$.] \textbf{Polar parametrization of $\gamma_h$}: Write $H(x,y;\bm{\beta})$  in polar coordinates to obtain 
		a real-analytic polar parametrization of $\gamma_h$ in the form
		\eqref{eq:r-series-proof}.
		
		\item[Step 2] \textbf{Computation of the power series of $\bm{I(h)}$:}  As previously discussed, we can write
		\[
		\omega = \omega_{ex} + \alpha_{ij} \sum_{i,j \in A} \omega_{ij},
		\]
		where \( \alpha_{ij} \in \mathbb{R} \), \( \omega_{ij} \in \mathcal{B}_n \), and \( \omega_{ex} \in \Omega^1_{e}\). Next, following the process described in the proof of the previous result, we can express
		\[
		I(h) = \sum_{\mu=1}^\infty K_{\mu} h^{\mu},
		\]
		where the \( K_{\mu} \) are computable expressions that depend linearly on \(\bm{\alpha}\) and polynomially on \( \bm{\beta} \).
		
		\item[Step 3] \textbf{Computation of the conditions for $\bm{I(h)\equiv 0}$:}  
		According to Proposition~\ref{prop-base-of-algo}, there exists a natural number \(N\) such that, if \(K_{\mu}=0\) for \(2\leq \mu\leq N\), then \(K_{\mu}=0\) for all \(\mu\geq 1\). Hence, a natural strategy for solving our problem is to compute \(K_{\mu}\) for \(2\leq \mu\leq m\), for some \(m\geq 2\), and determine the conditions on \(\bm{\alpha}=\{\alpha_{ij}\}\) and \(\bm{\beta}=\{\beta_l\}\) under which \(K_{\mu}=0\) for all \(2\leq \mu\leq m\). We proceed as follows.
		
		Suppose that \(\mu=2\), or that \(K_{\mu}\equiv 0\) for all \(\mu<m\). The equation \(K_m=0\) is linear  in \(\bm{\alpha}\) and polynomial in \(\bm{\beta}\). Hence, for a fixed coefficient \(\alpha_{i_0j_0}\in\bm{\alpha}\), we can write
		\[
		K_m=p\alpha_{i_0j_0}+q=0,
		\]
		where \(p\) and \(q\) are polynomials in \(\bm{\beta}\). Moreover, \(p\) is a nonzero polynomial that does not depend on any parameter in \(\bm{\alpha}\), whereas \(q\) depends linearly on the remaining parameters in \(\bm{\alpha}\).
		
		Note that, if \(\alpha_{ij}=0\) for all \(\alpha_{ij}\in\bm{\alpha}\), then the only polynomial \(1\)-forms for which \(I(h)\equiv 0\) are the exact ones. Therefore, we divide the problem into two complementary subproblems:
		\begin{enumerate}
			\item[] \(3.m.a\): Solve for \(\alpha_{i_0j_0}\), setting
			$\alpha_{i_0j_0}=-\frac{q}{p},
			$ under the assumption that \(p\neq 0\).
			
			\item[] \(3.m.b\): Impose the additional conditions on \( \alpha_{ij} \) and \( \beta_l \) given by \(p=0\) and \(q=0\).
		\end{enumerate}
		
		In both subproblems, we have \(K_m\equiv 0\).
		
		We then choose one of these subproblems and proceed analogously to obtain \(K_{m+1}\equiv 0\).
		
		At each step, either the number of coefficients in \(\bm{\alpha}\) is reduced by one, or the codimension of the parameter space in \(\bm{\beta}\) decreases by two. This process generates a binary tree of depth at most \(N\). Consequently, the original problem can be reduced to at most \(2^N\) simpler subproblems.
	\end{enumerate}
	
	\begin{rema}
		Another valid approach is to pick an $m\geq 2$, and compute all $K_{\mu}$ for $2\leq \mu \leq m$, construct the algebraic system $K_2=0,\ldots,K_m=0$ and compute the conditions imposed over $\{\alpha_{ij}\}$ and $\{\beta_l\}$ by, for example, computing the Gröbner basis. While this approach is theoretically feasible, it can become computationally challenging if the number of unknowns is too large, and so we have proposed the previous, more computationally efficient method.  Nevertheless, we also use this approach when it is convenient.
	\end{rema}
	
	\subsection{Sufficiency of the conditions}

	The number $N\in \mathbb{N}$, guaranteed by Proposition \ref{prop-base-of-algo}, is called the {\it Bautin number} and
	it is not known a priori.  Hence,
	the algorithm described above cannot be used to directly characterize the Tangential Center Problem. Indeed, the successive vanishing conditions obtained from the expansion of the Abelian integral $I(h)$ are only necessary conditions for the existence of a tangential center. More precisely, one computes the coefficients of the corresponding series expansion until the new conditions generated at a given step belong to the ideal generated by the previous ones. This stabilization process produces an algebraic ideal whose associated prime decomposition yields a collection of algebraic varieties in the coefficient space of $\omega$. These varieties constitute natural candidates for irreducible components of the tangential center locus. 
	
	However, since the truncation is performed without the knowledge of $N$, the resulting varieties need not consist entirely of tangential centers. Consequently, the computational part of the problem must be complemented by a geometric and analytic analysis of each candidate component. For every variety arising from the associated prime decomposition, one must determine whether the corresponding $1$--forms actually satisfy the Tangential Center Problem. This requires identifying the structural mechanisms that force the Abelian integral to vanish identically, and therefore guarantee the vanishing of the first-order displacement function. 
	
	Identifying and characterizing these mechanisms (symmetries, exactness or relative exactness, etc) is therefore a crucial step in the determination of the tangential center locus. Motivated by the works of Movasati \cite{Movasati}, Zare \cite{Zare}, and Pontigo-Herrera \cite{PontigoHerrera} on the irreducible components of the tangential center space for some generic families, we prove the following result. It provides sufficient conditions for the Tangential Center Problem and, at the same time, highlights the geometric and algebraic features underlying the solutions arising from the Hamiltonian families considered in this work.

	\begin{prop}\label{prop:suficientes}
		Let $H\colon\mathbb{R}^2\to\mathbb{R}$ be a  polynomial 
		with a
		nondegenerate minimum at the origin and in canonical form. Let
		$\gamma_h\subset\{H=h\}$ be a cycle of $dH=0$.
		Let $\omega$ be a smooth $1$--form defined in a neighborhood of $\gamma_h$, and
		consider the Abelian integral associated to it.
		
		Assume that for some 
		$h_0\in \R$, $H-h_0=\prod_{j=1}^m f_j$,
		where $f_j$ is not zero at any point of $\gamma_h$, $j=1,\ldots,m$, and 
		$H$ admits global $\mathbb{Z}_{k_i}$-symmetries $\sigma_i$, $i\in \{1,\ldots, l\}$. Let 
		\[
		\omega \in \varOmega^1_{\mathrm{re}}
		\;\oplus\;
		\bigoplus_{j=1}^m \varOmega^1_{\log}(f_j)
		\;\oplus\;
		\bigoplus_{i=1}^l \varOmega^1_{\sigma_i}.
		\]
		Then $I(h)\equiv0$.
	\end{prop}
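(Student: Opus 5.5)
By linearity of $\omega\mapsto\oint_{\gamma_h}\omega$, it suffices to show that $I(h)\equiv 0$ separately for each summand: for $\omega\in\varOmega^1_{\mathrm{re}}$, for $\omega\in\varOmega^1_{\log}(f_j)$ with each $j$, and for $\omega\in\varOmega^1_{\sigma_i}$ with each $i$. Throughout, $\gamma_h$ is a closed curve contained in the level set $\{H=h\}$. Orientation will matter only in the symmetric case.

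\emph{Relatively exact and logarithmic forms.} Let $\omega=dF+G\,dH$.
\begin{itemize}
\item $\oint_{\gamma_h}dF=0$, since $\gamma_h$ is closed.
\item $dH$ restricts to zero on $\gamma_h\subset\{H=h\}$, so $\oint_{\gamma_h}G\,dH=0$.
\end{itemize}
Now let $\omega=\lambda_j(H-h_0)\,df_j/f_j$. By hypothesis $f_j$ does not vanish on $\gamma_h$, and by continuity also not on a neighbourhood of it. On that neighbourhood $df_j/f_j=d\log|f_j|$ is exact. Write $(H-h_0)\,d\log|f_j|=d\big((H-h_0)\log|f_j|\big)-\log|f_j|\,dH$. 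The first term integrates to zero over a closed curve. The second integrates to zero because $dH|_{\gamma_h}=0$. This reduces the logarithmic case to the relatively exact argument with a smooth (non-polynomial) $F$ and $G$, which is allowed since $\omega$ need only be smooth near $\gamma_h$. No sign condition on $f_j$ is needed, since $\log|f_j|$ is smooth where $f_j\neq0$.

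\emph{Symmetric forms.} This is the main step. Let $\sigma=\sigma_i$ have order $k$, with $\sigma(\mathbf 0)=\mathbf 0$ and $H\circ\sigma=H$.
\begin{enumerate}
\item First show $\sigma(\gamma_h)=\gamma_h$ as sets. Since $\sigma$ preserves $H$, it maps the level set $\{H=h\}$ to itself. Since $\sigma$ is a homeomorphism fixing the origin, it maps the connected component through the period annulus to a connected component of the same level surrounding $\mathbf 0$. For small $h$ that component is unique (the oval near the minimum). For general $h\in(0,\hat h)$, use that $\sigma$ maps the period annulus $\mathcal A$ (the maximal connected family of ovals around $\mathbf 0$) onto itself.
\item Next, $\sigma$ preserves the orientation of $\gamma_h$ iff $\sigma$ is orientation-preserving on $\mathbb R^2$. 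The reason is that $\gamma_h$ bounds a disc containing $\mathbf 0$, which $\sigma$ maps onto itself, and the boundary orientation of that disc transforms according to the degree of $\sigma$.
\item Hence $\oint_{\gamma_h}(\sigma^i)^*\omega=\oint_{\sigma^i(\gamma_h)}\omega=\epsilon^i\oint_{\gamma_h}\omega$, where $\epsilon=1$ in the orientation-preserving case and $\epsilon=-1$ otherwise.
\end{enumerate}
For orientation-preserving $\sigma$, summing over $i=0,\dots,k-1$ gives $k\,I(h)=\oint_{\gamma_h}\sum_i(\sigma^i)^*\omega=0$. For orientation-reversing $\sigma$ (where $\sigma^2$ preserves orientation), each term $(-1)^i\oint(\sigma^i)^*\omega$ equals $(-1)^i(-1)^iI(h)=I(h)$. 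So the defining relation gives $k\,I(h)=0$. In both cases $I(h)\equiv0$.

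The step I expect to need the most care is the invariance of $\gamma_h$ and of its orientation under $\sigma$, for all $h$ in the range rather than just small $h$. If needed, I would argue first for small $h$, using the local Morse structure near the minimum. I would then conclude $I\equiv 0$ on all of $(0,\hat h)$ by analyticity of $I$, noting that Abelian integrals of polynomial forms over the continuous family $\gamma_h$ are analytic in $h$.
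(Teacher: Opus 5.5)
Your proof is correct and follows the paper's argument: reduction to each summand by linearity, vanishing of $\oint_{\gamma_h}(dF+G\,dH)$ because $dH$ restricts to zero on $\gamma_h$, the same reduction for the logarithmic forms, and the averaging identity $k\,I(h)=\oint_{\gamma_h}\sum_i(\pm1)^i(\sigma^i)^*\omega$ for the symmetries. Two of your steps are more careful than the paper's, which simply asserts them. You use $\log|f_j|$ to handle the sign of $f_j$, and you derive $\sigma(\gamma_h)=\gamma_h$, together with the induced orientation, from invariance of the period annulus rather than of the level set alone.
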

	
	\begin{proof}
		By lineality, it suffices to prove $I(h)\equiv 0$ for $1$-forms in each of the summands. 
		
		If $\omega\in \varOmega^1_{re}$, that is, $\omega=dF+G\,dH$ for $G,H\in\mathbb{R}[x,y]$, then along the cycle $\gamma_h$ we have
		$dH=0$. Therefore,
		\[
		\oint_{\gamma_h}\omega
		=
		\oint_{\gamma_h} dF
		=
		0,
		\]
		
		If $\omega\in \varOmega^1_{\log}(f_j)$, then,
		$\omega = \lambda_j (H-h_0) \frac{df_j}{f_j},\; \lambda_j\in\mathbb{R}$. Therefore, 
		\[
		\omega = \lambda_j (H-h_0) d\log(f_j),
		\]
		and we conclude as above.
		
		Assume now that $\sigma$ is a $\mathbb{Z}_{k}$-symmetry that satisfies the
		stated properties. Since $\sigma$ leaves the level set $\{H=h\}$ invariant,
		$\sigma(\gamma_h)=\gamma_h$. Assume first that $\sigma$ preserves the orientation. Hence,
		\[
		\oint_{\gamma_h}\omega
		=
		\oint_{\sigma^i(\gamma_h)}\omega
		=
		\oint_{\gamma_h}(\sigma^i)^*\omega
		\qquad
		\text{for all } i=0,\dots,k-1.
		\]
		Adding these identities, we obtain
		\[
		k\,\oint_{\gamma_h}\omega
		=
		\oint_{\gamma_h}
		\sum_{i=0}^{k-1}(\sigma^i)^*\omega.
		\]
		By hypothesis, the integrand on the right-hand side vanishes identically, and
		hence
		\[
		\oint_{\gamma_h}\omega = 0.
		\]
		Lastly, if $\sigma$ reverses the orientation, then 
		\[
		\oint_{\gamma_h}\omega
		=
		\oint_{\sigma^i(\gamma_h)}\omega
		=
		\oint_{\gamma_h}(-1)^i(\sigma^i)^*\omega
		\qquad
		\text{for all } i=0,\dots,k-1.
		\]
		Again adding these identities, we obtain
		\[
		k\,\oint_{\gamma_h}\omega
		=
		\oint_{\gamma_h}
		\sum_{i=0}^{k-1}(-1)^i(\sigma^i)^*\omega.
		\]
		By hypothesis, the integrand on the right-hand side vanishes identically, and
		hence
		\[
		\oint_{\gamma_h}\omega = 0.
		\]
	\end{proof}

	\begin{rema}
		The first two cases correspond to the triviality of the Abelian integral in the relative cohomology group associated with the Hamiltonian $H$, while the symmetric case expresses a cancellation phenomenon induced by a finite symmetry of the level curves.
		Both mechanisms occur naturally in the study of polynomial perturbations of
		Hamiltonian differential equations and in the reduction of Abelian integrals via the Petrov
		module.
	\end{rema}
	
	Given one of the specific subfamilies of Hamiltonians $H(x,y)$ that the algorithm identifies, and a specific $1$--form $\omega^*\in\Omega^1_n(\mathbf{0})$ which is a candidate as a solution for the tangential problem, we can use Proposition \ref{prop:suficientes} to prove that $I(h)=\oint_{\gamma_h}\omega^*\equiv0$ (this is, that indeed $\omega^*\in\TCL{n}$) in the following way. First, by Proposition \ref{prop:suficientes}, if we can prove with an straightforward computation that \[\omega^*\in \varOmega^1_{\mathrm{re}}
	\;\oplus\;
	\bigoplus_{j=1}^m \varOmega^1_{\log}(f_j)
	\;\oplus\;
	\bigoplus_{i=1}^l \varOmega^1_{\sigma_i},\] then we are done. 
	
	If the question is not that straightforward, in order to study the direct sum above, consider $\omega\in\Omega^1_n(\mathbf{0})$ and, as before, write it as
	\[
	\omega=\omega_{ex}+\alpha_{ij}\sum_{i,j\in A}\omega_{ij}=\omega_{ex}+\hat{\omega}.
	\]
	For our specific subfamily of Hamiltonians, we can then apply Green's Theorem
	\[\oint_{\gamma_h}\omega=\oint_{\gamma_h}\hat{\omega}=\int_{{int(\gamma_h)}} \div (\hat{\omega})\,dxdy.\]
	With this, by studying properties of the divergence one can easily compute the vectorial subspace in $\Omega^1_n(\mathbf{0})$ of the $1$--forms that are algebraic relatively exact, or the vectorial subspace of the $1$--forms that share a symmetry of $H(x,y)$. Then, by Proposition \ref{prop:suficientes}, all the $1$--forms in the direct sum of these subspaces will have null integral. If furthermore one of the level curves of the Hamiltonian factorizes, the same applies for the correspondent logarithmic $1$--forms. 
	
	\section{Quadratic Hamiltonian differential equations}
	\label{sec:proofs-quad}

	In this section, we study the  Tangential Center Problem for quadratic Hamiltonian differential equations having a nondegenerate center at the origin. The goal is to prove Theorem~\ref{theo:mainqintro}, and we  do so in two propositions: Proposition~\ref{prop:genericq_tan} for the generic family \eqref{Hamilt-q}, and Proposition~\ref{prop:triangle_tan} for the triangular family \eqref{eq:HamT}. 
	
	As we have explained in Section~\ref{sec:prel},  the canonical forms 
	of cubic polynomials are given by  the polynomial
	$$
	H_N(x,y)=\frac{x^{2}+y^{2}}{2}
	+(x^{2}+y^{2})y
	+\frac{3x^{2}-y^{2}}{3}\,a y
	-\frac{x^{2}-3y^{2}}{3}\,b x,
	$$
	where $a,b\in\mathbb{R}$, for the generic case, and by the polynomial
	$$
	H_T(x,y)=\frac{x^2+y^2}{2}+x^2y-\frac{y^3}{3},
	$$
	for the Hamiltonian triangle.
	
	Note that the canonical form $H_N(x,y)$ is invariant under the transformation $(x,y,a,b)\rightarrow(-x,-y,-a,-b)$, so it is sufficient to consider the parameters $a$ and $b$ in the semiplane 
	\[
	\mathbb{H}(a,b):=\{(a,b)\in\mathbb{R}^2,b\geq0\}.
	\]
	
	\begin{prop}\label{prop:genericq_tan}
		Consider the quadratic Hamiltonian differential equation ${dH_N=0}$, associated with the polynomial $H_N(x,y)$, given in \eqref{Hamilt-q}. 
		
		\begin{enumerate}
			\item If  $b\neq 0$, then  the Tangential Center Problem for $dH_N=0$ has the  solution
			\[
			\TCL{2} = \varOmega^1_{e}.
			\] 
			
			\item If  $b=0$, then $H_N(x,y)$ has a orientation-reversing $\mathbb{Z}_2$-symmetry $\sigma(x,y)=(-x,y)$ and the Tangential Center Problem for $dH_N=0$ has the solution:
			\[
			\TCL{2} = \varOmega^1_{e}  \oplus \varOmega^1_{\sigma}=\varOmega^1_{e}  \oplus\langle\omega_1\rangle.
			\]
			where $\omega_1=xydx$.
		\end{enumerate}
	\end{prop}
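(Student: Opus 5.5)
We work modulo exact forms, using the basis \eqref{basispetrov} with $n=2$. Since $\omega\in\varOmega^1_2(\mathbf{0})$ must keep the origin singular, the representative has no constant terms. Modulo $\varOmega^1_e$ we may therefore write
\[
\hat\omega=\alpha_{01}\,y\,dx+\alpha_{11}\,xy\,dx+\alpha_{02}\,y^2\,dx ,
\]
where a monomial $x^2dx$ or $x\,dx$ would be exact and is already absorbed. The problem then reduces to finding the linear conditions on $(\alpha_{01},\alpha_{11},\alpha_{02})$, depending on $(a,b)\in\mathbb{H}(a,b)$, under which $I(h)=\oint_{\gamma_h}\hat\omega\equiv 0$.

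\textbf{Necessity.} We apply the algorithm of Section~\ref{sec:3}. First we compute the polar expansion $r(h,\theta)=h+r_2(\theta)h^2+r_3(\theta)h^3+\cdots$ from $H_N=h^2/2$. Then we compute the first few coefficients $K_\mu$.
\begin{itemize}
\item The lowest coefficient is $K_2=-\pi\alpha_{01}$, which is (minus) the area term, so $\alpha_{01}=0$.
\item The next coefficients become linear equations in $\alpha_{11},\alpha_{02}$, with coefficients polynomial in $a,b$.
\item For the cubic terms, only the $\cos^3\theta$-harmonic parts of $r_2$ interact with $x\,y\,dx$ and $y^2dx$ at first order. By parity, we expect $K_3=0$ identically.
\end{itemize}
The real content lies in $K_4$, $K_5$ (and possibly $K_6$). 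We expect the contribution of $y^2dx$ to $K_4$ to be a nonzero numerical multiple of something like $(1+a)$ or a constant, and the contribution of $xy\,dx$ to be proportional to $b$. Solving step $3.m.a/3.m.b$ then gives:
\begin{itemize}
\item if $b\ne0$, the $2\times2$ system formed by two consecutive $K_\mu$ has nonzero determinant (a polynomial in $a,b$ divisible by $b$, whose remaining factor we must check has no zeros in $\mathbb{H}(a,b)$ with $b>0$), forcing $\alpha_{11}=\alpha_{02}=0$;
\item if $b=0$, all equations reduce to $\alpha_{02}=0$, leaving $\alpha_{11}$ free.
\end{itemize}

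\textbf{Sufficiency.} For $b=0$, $H_N(-x,y)=H_N(x,y)$, so $\sigma(x,y)=(-x,y)$ is an orientation-reversing $\mathbb{Z}_2$-symmetry. We have $\sigma^*(xy\,dx)=(-x)y\,d(-x)=xy\,dx$, hence $\omega_1-\sigma^*\omega_1=0$ and $\omega_1\in\varOmega^1_\sigma$. By Proposition~\ref{prop:suficientes}, $\oint_{\gamma_h}\omega_1\equiv0$. Conversely, $\varOmega^1_\sigma$ modulo exact forms is spanned by $xy\,dx$: the conditions for $y\,dx$ and $y^2dx$ fail, since $\sigma^*$ sends them to $-y\,dx$ and $-y^2dx$, and their $\sigma$-balanced combinations are exact. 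This establishes the identification $\varOmega^1_e\oplus\varOmega^1_\sigma=\varOmega^1_e\oplus\langle\omega_1\rangle$. Exact forms trivially give $I\equiv0$.

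\textbf{Main obstacle.} The hard step is the case $b\neq0$. We must show that the determinant of the linear system in $(\alpha_{11},\alpha_{02})$ coming from the first nontrivial $K_\mu$'s vanishes on $\mathbb{H}(a,b)$ only at $b=0$. If the lowest pair of coefficients has an extra factor with real zeros (for instance a curve in the $(a,b)$-plane), we will go one order higher. On that curve we would include $K_6$ and take a resultant or Gröbner basis, as in the Remark following the algorithm, to show that the full system still forces $\alpha_{11}=\alpha_{02}=0$. We will also have to check, using the classification of Iliev, that points such as the degenerate values of $(a,b)$ do not produce a symmetric or triangle-type Hamiltonian in this family; the triangle case is treated separately in Proposition~\ref{prop:triangle_tan}.
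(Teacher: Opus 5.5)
\textbf{There is a genuine gap: the necessity direction, which is all of part (1), is not proved.} Your reduction to $\hat\omega=(\alpha_{01}y+\alpha_{11}xy+\alpha_{02}y^2)\,dx$ matches the paper. So does your sufficiency argument for $b=0$, including the fact that $\varOmega^1_\sigma$ is spanned by $xy\,dx$ modulo exact forms. For $b\neq0$, however, you never compute $K_4$ or $K_6$. You only guess their shape and defer the outcome to a determinant, a possible ``remaining factor'', resultants or Gr\"obner bases. The plan as written would also stall. Every odd coefficient vanishes, in particular $K_5\equiv0$, so ``two consecutive $K_\mu$'' does not give a $2\times2$ system. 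The decisive orders are $h^4$ and $h^6$.

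\textbf{The missing computation is short.} Write $C_3=\sin\theta+\frac a3\sin3\theta-\frac b3\cos3\theta$ for the cubic part of $H_N$ in polar coordinates. Then $r=h-C_3h^2+\frac52C_3^2h^3-8C_3^3h^4+O(h^5)$. Green's theorem gives $\oint y\,dx=-\frac12\int_0^{2\pi}r^2\,d\theta$, $\oint y^2dx=-\frac23\int_0^{2\pi}r^3\sin\theta\,d\theta$ and $\oint xy\,dx=-\frac13\int_0^{2\pi}r^3\cos\theta\,d\theta$.
\begin{itemize}
\item The first integral is $-\pi h^2+O(h^4)$.
\item The second is $2\pi h^4+O(h^6)$, because only the $\sin\theta$-harmonic of $C_3$ contributes. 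There is no factor like $(1+a)$.
\item For the third, $C_3$ and $C_3^2$ have no $\cos\theta$-harmonic. The coefficient of $h^6$ in $r^3$ is $-40\,C_3^3$, and the $\cos\theta$-harmonic of $C_3^3$ comes only from the term $-b\sin^2\theta\cos3\theta$. This gives $\oint xy\,dx=\frac{10}{3}\pi b\,h^6+O(h^7)$.
\end{itemize}
Hence, with $c_i$ polynomial in $a,b$,
\[
K_2=-\pi\alpha_{01},\qquad K_4=\pi(2\alpha_{02}+c_1\alpha_{01}),\qquad K_6=\pi\bigl(c_2\alpha_{01}+c_3\alpha_{02}+\tfrac{10}{3}b\,\alpha_{11}\bigr).
\]
This is a triangular system with diagonal $-\pi$, $2\pi$, $\frac{10}{3}\pi b$, which is exactly what the paper obtains. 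It settles both cases at once: for $b\neq0$ all $\alpha$'s vanish, and for $b=0$ only $\alpha_{11}$ survives. No extra factor, elimination, or comparison with Iliev's classification is needed.
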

	
	\begin{proof}
		Take $\omega\in\Omega^1_2(\mathbf{0})$. As shown in Section~\ref{sec:3}, there exist $\omega_{ex}\in\varOmega^1_e$ and $\hat\omega=(\alpha_{01}y+\alpha_{11}xy+\alpha_{02}y^2)dx$ such that
		\[
		\omega=\omega_{ex}+\hat{\omega}=\omega_{ex}+(\alpha_{01}y+\alpha_{11}xy+\alpha_{02}y^2)dx,
		\]
		so evidently
		\[I(h)=\oint_{\gamma_h}\omega=\oint_{\gamma_h}\hat{\omega}.\]
		For $\hat{\omega}$, we now apply our algorithm to express the Abelian integral $I(h)$ as a power series in $h$. In this proof, we provide a particularly detailed account of the computations involved in this process, as this is the first example illustrating the application of the algorithm.

		In polar coordinates, the Hamiltonian \eqref{Hamilt-q} writes as
		\[H(r,\theta)=\frac{r^{2}}{2}+
		\left( \sin(\theta)+a\frac{\sin(3\theta)}{3}-b\frac{\cos(3\theta)}{3} \right)r^3.\]
		Let $\gamma_h$ be the curve corresponding to the level $h^2/2$. Then $H(r,\theta)=h^2/2$ defines an analytic implicit function $r(h,\theta)$. Expanding in power series of $h$, 
		\[
		\begin{split}
			r(h,\theta) =& h -\frac{1}{3} h^2 (b \sin (3 \theta)-a \cos (3 \theta)+3 \sin (\theta)) \\
			&+\frac{5}{18} h^3 (b \sin (3 \theta)-a\cos (3\theta)+3 \sin(\theta))^2\\
			&-\frac{8}{27} h^4 (b \sin (3 \theta)-a \cos (3 \theta)+3 \sin(\theta))^3\\
			&+\frac{77}{216} h^5 (b \sin (3 \theta)-a \cos (3 \theta)+3 \sin(\theta))^4 +
			\mathcal{O}(h^6).
		\end{split}
		\]
		Since
		\[
		\begin{split}
			I(h) &=\alpha_{01} \int_0^{2\pi}
			r(h,\theta)\sin(\theta)\,d (r(h,\theta)\cos(\theta)) 
			\\ 
			& +   \alpha_{11} \oint_{\gamma_h} 
			r^2(h,\theta)\cos(\theta)\sin(\theta) \,d (r(h,\theta)\cos(\theta))
			\\ 
			& + \alpha_{02} \oint_{\gamma_h} 
			r^2(h,\theta)\sin^2(\theta)\,d (r(h,\theta)\cos(\theta)),
		\end{split}
		\]
		we can expand $I(h)=\displaystyle\sum_{\mu=1}^\infty K_\mu h^{\mu}$, for example, up to power six, and we get 
		\[
		\begin{aligned}
			I(h)
			=&-\pi\alpha_{01}h^2
			+\pi\left(
			2\alpha_{02}
			+\alpha_{01}c_1
			\right)h^4+\pi\left(
			\alpha_{01}c_2
			+\frac{10}{3}\alpha_{11}b
			+\alpha_{02}c_3\right)h^6
			+O(h^7),
		\end{aligned}
		\]
		for certain coefficients $c_1,c_2,c_3$.
		Clearly, $K_2=K_4=K_6=0$ if and only if $\alpha_{01}=\alpha_{02}=\alpha_{11} b=0$. If $b\neq 0$, then $\alpha_{01}=\alpha_{02}=\alpha_{11}=0$, that is, $\omega=\omega_{ex}$ and consequently the only solutions of the tangential problem are the exact $1$--forms. This proves statement (1).
		
		If  $b=0$, then we  get $K_\mu=0$ for any $\mu$ we compute, and thus the candidates for solutions of the tangential problem are $\omega=\omega_{ex}+\alpha_{11}xydx$. We need to prove that the Abelian integral of such $1$--forms is null. 
		
		Recall that when $b=0$, $H(-x,y)=H(x,y)$, and that $xydx$ remains invariant under the change $x\rightarrow-x$. Thus, we are done by applying Proposition \ref{prop:suficientes} taking as $\sigma$ the reflection with respect to the $y$ axis. 
	\end{proof}
	
	We prove now the equivalent result but for the triangle family.
	\begin{prop}\label{prop:triangle_tan}
		Consider the quadratic Hamiltonian differential equation ${dH_T=0}$, associated with the polynomial $H_T(x,y)$, given in  \eqref{eq:HamT}. Then, $H_T(x,y)$ has a orientation-reversing $\mathbb{Z}_2$-symmetry $\sigma(x,y)=(-x,y)$, and an orientation-preserving $\mathbb{Z}_3$-symmetry, $\tau(x,y)=\left(
		-\frac12 x-\frac{\sqrt3}{2}y,\;
		\frac{\sqrt3}{2}x-\frac12 y
		\right)$; and the Tangential Center Problem for $dH_T=0$ has the solution:
		\[
		\TCL{2}=\varOmega^1_{e}  \oplus \varOmega^1_{\sigma} \oplus \varOmega^1_{\tau}=\varOmega^{1}_{e}
		\oplus
		\langle \omega_1\rangle
		\oplus \langle \omega_2\rangle,
		\]
		where $\omega_1=xydx$, $\omega_2=y^2dx$.
		
	\end{prop}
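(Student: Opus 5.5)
We reduce modulo exact forms as in Section~\ref{sec:3}. Write any $\omega\in\Omega^1_2(\mathbf{0})$ as
\[
\omega=\omega_{ex}+(\alpha_{01}y+\alpha_{11}xy+\alpha_{02}y^2)\,dx ,\qquad \omega_{ex}\in\varOmega^1_e .
\]
The result then reduces to two facts. First, $\alpha_{01}$ must vanish. Second, the two forms $\omega_1=xy\,dx$ and $\omega_2=y^2dx$ lie in $\varOmega^1_e\oplus\varOmega^1_\sigma\oplus\varOmega^1_\tau$, so that Proposition~\ref{prop:suficientes} applies to them.

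\textbf{The symmetries and the parametrization.} Since $\sin 3\theta=3\sin\theta\cos^2\theta-\sin^3\theta$, the cubic part of $H_T$ is $x^2y-y^3/3=\frac{r^3}{3}\sin 3\theta$. Hence
\[
H_T(r,\theta)=\frac{r^2}{2}+\frac{r^3}{3}\sin 3\theta .
\]
This expression is invariant under $\theta\mapsto\theta+2\pi/3$, which gives the $\mathbb{Z}_3$-symmetry $\tau$. It is also invariant under $\theta\mapsto\pi-\theta$, which is $(x,y)\mapsto(-x,y)$ and gives the reflection $\sigma$. In particular, the function $r(h,\theta)$ of Step~1 depends on $\theta$ only through $\sin 3\theta$, so Step~2 produces the same kind of expansion as in the proof of Proposition~\ref{prop:genericq_tan}.

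\textbf{Necessity.} The leading term is $K_2=-\pi\alpha_{01}$. This is just the leading order of the signed enclosed area $\oint_{\gamma_h}y\,dx$, which is nonzero. Therefore $I\equiv0$ forces $\alpha_{01}=0$. No further condition is required, because the remaining two directions will turn out to be tangential centers.

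\textbf{Sufficiency.} We must show that $\omega_1$ and $\omega_2$ each lie in $\varOmega^1_e\oplus\varOmega^1_\sigma\oplus\varOmega^1_\tau$.

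For $\omega_1$ this is immediate: $\sigma^*(xy\,dx)=(-x)y\,d(-x)=xy\,dx$. Since $\sigma$ reverses orientation, the defining condition of $\varOmega^1_\sigma$ is $\omega-\sigma^*\omega=0$, and $\omega_1$ satisfies it.

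For $\omega_2$ the reflection does not help, since $\sigma^*\omega_2=-\omega_2$. We therefore use $\tau$ and decompose
\[
\omega_2=\Big(\omega_2-\tfrac13\sum_{i=0}^{2}(\tau^i)^*\omega_2\Big)+\tfrac13\sum_{i=0}^{2}(\tau^i)^*\omega_2 .
\]
The first summand is $\tau$-balanced, because $\tau^3=\mathrm{Id}$. The second summand, the average $\bar\omega$, is a $\tau$-invariant homogeneous quadratic $1$-form. Its exterior derivative is $d\bar\omega=g\,dx\wedge dy$, where $g$ is a linear function. Since $\tau$ preserves orientation, $g$ is $\tau$-invariant, and a rotation by $2\pi/3$ has no nonzero invariant linear functions. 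Hence $g=0$, so $\bar\omega$ is closed and polynomial, and therefore exact. This places $\omega_2\in\varOmega^1_\tau\oplus\varOmega^1_e$, and Proposition~\ref{prop:suficientes} gives $\oint_{\gamma_h}\omega_2\equiv0$. The same averaging argument could also be applied to $\omega_1$ as a cross-check.

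\textbf{Directness and expected difficulty.} It remains to check that the sum $\varOmega^1_e\oplus\varOmega^1_\sigma\oplus\varOmega^1_\tau$ is direct, as written. We will verify this by linear algebra in the basis \eqref{basispetrov}, checking that the images of $\varOmega^1_\sigma$ and $\varOmega^1_\tau$ modulo exact forms are the lines spanned by $\omega_1$ and $\omega_2$ respectively. The main obstacle is precisely this bookkeeping: both $\varOmega^1_\sigma$ and $\varOmega^1_\tau$ are defined as spaces of forms rather than of classes, so their interaction with exact forms must be handled carefully.
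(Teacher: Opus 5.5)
Your reduction, the necessity step ($K_2=-\pi\alpha_{01}$) and the sufficiency step are correct, and they are essentially the paper's proof. The one difference is in how exactness of the $\tau$-average is shown. The paper exhibits the primitive $\frac34xy^2-\frac14x^3$ of $\sum_i(\tau^i)^*\omega_2$ explicitly. You instead note that $d\bar\omega=g\,dx\wedge dy$ with $g$ a $\tau$-invariant linear function, so $g=0$, which avoids computation.

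The gap is in your last paragraph: the directness you plan to verify is false, so that bookkeeping cannot succeed.
\begin{itemize}
\item The claim that $\varOmega^1_\tau$ maps onto the line $\langle\omega_2\rangle$ modulo exact forms fails, and your own ``cross-check'' shows why. Since $d\omega_1=-x\,dx\wedge dy$ also has a linear coefficient, the $\tau$-average of $\omega_1$ is exact; explicitly, $\sum_i(\tau^i)^*\omega_1=\frac34\,d\bigl(x^2y-\tfrac13y^3\bigr)$.
\item In general, $\omega\in\varOmega^1_\tau$ only forces $d\omega=g\,dx\wedge dy$ with $g(0,0)=0$. Every such $g$ is attained (subtract the $\tau$-average), so the classes of $\varOmega^1_\tau$ modulo exact forms are all of $\langle[\omega_1],[\omega_2]\rangle$.
\item At the level of forms the overlaps are explicit: $d(x^2)\in\varOmega^1_e\cap\varOmega^1_\sigma$, and $\omega_1-\frac14\,d\bigl(x^2y-\tfrac13y^3\bigr)$ is a non-exact element of $\varOmega^1_\sigma\cap\varOmega^1_\tau$.
\end{itemize}
So $\varOmega^1_e+\varOmega^1_\sigma+\varOmega^1_\tau$ is not a direct sum, either as a space of forms or modulo exact forms.

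The middle $\oplus$ in the statement must therefore be read as an ordinary sum; the paper's own proof does not address directness either. What you should prove is the chain
\[
\TCL{2}\subseteq\varOmega^1_e+\langle\omega_1,\omega_2\rangle\subseteq\varOmega^1_e+\varOmega^1_\sigma+\varOmega^1_\tau\subseteq\TCL{2}.
\]
The first inclusion is your necessity step, and the second is your sufficiency step. The third is Proposition~\ref{prop:suficientes} applied to arbitrary elements of $\varOmega^1_\sigma$ and $\varOmega^1_\tau$, not just to $\omega_1$ and $\omega_2$. You never state it, and it is needed for the first equality in the statement. The only decomposition that is genuinely direct is $\varOmega^1_e\oplus\langle\omega_1\rangle\oplus\langle\omega_2\rangle$. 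It holds because $d(\alpha\,\omega_1+\beta\,\omega_2)=-(\alpha x+2\beta y)\,dx\wedge dy$ vanishes only when $\alpha=\beta=0$.
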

	
	\begin{proof}
		
		Take $\omega\in\varOmega^1_2(\mathbf{0})$. It can be written, for some $\omega_{ex}\in\varOmega^1_e$, as
		\[
		\omega=\omega_{ex}+\hat{\omega}=\omega_{ex}+(\alpha_{01}y+\alpha_{11}xy+\alpha_{02}y^2)dx.
		\]
		Therefore
		\[I(h)=\oint_{\gamma_h}\omega=\oint_{\gamma_h}\hat{\omega}.\]
		For $\hat{\omega}$, we now apply our algorithm to express the Abelian integral $I(h)$ as a power series in $h$, that is,
		$$
		I(h)=\displaystyle\sum_{\mu=1}^\infty K_{\mu}h^{\mu}.
		$$
		
		In particular, we get $K_1=0$ always, and $K_2=0$ if and only if $\alpha_{01}=0$. If we impose this, then $K_\mu=0$ for any $\mu$ we compute. Thus, the algorithm is suggesting that any $1$--form $\omega=\omega_{ex}+(\alpha_{11}xy+\alpha_{02}y^2)dx$ is a candidate for a solution of the tangential problem. We now need to prove that the Abelian integral is indeed zero for perturbations like this. 
		
		Note that the Hamiltonian \eqref{eq:HamT} possesses two symmetries: reflection with respect to the $y$-axis and rotation by $2\pi/3$ radians. It is a straightforward computation to check that we can apply Proposition \ref{prop:suficientes} to $\omega_1 = xy dx$ for the reflection, so $\omega_1 \in \varOmega^1_{e}  \oplus \varOmega^1_{\sigma}$. Moreover,  for $\omega_2=y^2dx$ we have
		\[
		\omega_2 + \tau^*\omega_2 + (\tau^2)^*\omega_2
		=
		\frac34(y^2-x^2)\,dx+\frac32xy\,dy
		=
		d\!\left(\frac34xy^2-\frac14x^3\right),
		\]
		and, therefore, $\omega_2 \in \varOmega^1_{e}  \oplus \varOmega^1_{\tau}$.
		Consequently $I(h)\equiv 0$ for those perturbations and we are done.  
	\end{proof}
	
	\begin{rema}\label{rema:centers2}
		
		The classification obtained above admits a natural interpretation in terms of the classical center varieties for quadratic equations. Recalling Iliev \cite{Iliev} normal forms for quadratic families with a center in the origin, based on \.Zo{\l}adek's classification \cite{Zoladek}, one has that, besides the Hamiltonian families considered in this work, the remaining center components are given by: 
		\begin{enumerate} 
			
			\item \textbf{Reversible family} \[ \begin{cases} \dot x = y+(a+b+2)x^2+(2-a-b)y^2,\\[1ex] \dot y = -x+2(a-b-1)xy. \end{cases} \] 
			
			\item \textbf{Codimension--four family} \[ \begin{cases} \dot x = y+\left(4+\frac{4}{a}+a\right)x^2 +\left(\frac{4}{a}-a-4\right)y^2,\\[1ex] \dot y = -x+2(4-a)xy, \end{cases} \qquad a\neq 0. \]
			
			\item \textbf{Generalized Lotka--Volterra family} \[ \begin{cases} \dot x = y+(1+a)x^2-2bxy+(a-1)y^2,\\[1ex] \dot y = -x+2(1-a)xy-b(x^2-y^2). \end{cases} \] \end{enumerate} 
		
		A direct comparison of coefficients shows that the generic Hamiltonian family intersects the reversible family precisely when \[ b=0, \] while it has no intersection with either the codimension--four or the generalized Lotka--Volterra families. Furthermore, the Hamiltonian triangle belongs simultaneously to the reversible and generalized Lotka--Volterra families.
		
		Thus, the exceptional cases detected by the Tangential Center Problem occur precisely at the loci where different center families meet. This provides a geometric interpretation of the classification obtained here. Indeed, the condition $b=0$ corresponds to the intersection between the generic Hamiltonian and reversible center families, whereas the Hamiltonian triangle is a distinguished case belonging simultaneously to the reversible and generalized Lotka--Volterra families. Consequently, the decomposition \[ b\neq0,\qquad b=0,\qquad \text{Hamiltonian triangle}, \] obtained for the tangential center locus mirrors the stratification induced by the intersections of the irreducible components of the classical center variety.
		
		It should be noted, however, that the mechanisms responsible for tangential centers do not necessarily coincide with those responsible for ordinary centers. While the cases $b=0$ and the Hamiltonian triangle naturally arise from intersections with non-Hamiltonian center families, the vanishing of the corresponding Abelian integrals is explained by additional geometric properties of the Hamiltonian equations under consideration. In particular, the Hamiltonian triangle possesses a rotational symmetry of order three (a $120^\circ$ rotation), which forces the vanishing of the Abelian integrals appearing in the Tangential Center Problem. This symmetry is not, by itself, a center condition in the classical sense. 
		
		Nevertheless, the fact that the classification obtained from the Tangential Center Problem reproduces exactly the pattern of intersections among \.Zo{\l}adek's center families suggests a close connection between the geometry of the center variety and the structure of the tangential center locus.
	\end{rema}

	\section{Cubic Hamiltonian differential equations}
	\label{sec:proofs-cubic}

	In this section, we  apply the method explained in Section~\ref{sec:3}  to study the Tangential Center Problem for a family of cubic Hamiltonian equations. 
	As we mention in the introduction,  we  study the family  \eqref{eq:Hamce0}, that we recall here:
	\begin{equation*}\label{eq:Hamce}
		H(x,y)=\frac{x^2+y^2}{2}+x^2y-\frac{y^3}{3}+ax^4+bx^2y^2+cy^4.
	\end{equation*}
	This is a generalization of the Hamiltonian triangle~\eqref{eq:HamT} by adding homogeneous quartic terms to the Hamiltonian with only even exponents.
	
	We also recall that by the results of Ilyashenko \cite{Ilyashenko} if the equation has $n^2$ different critical points all of them at different energy levels, then the only solution to the tangential center problem are exact $1$-forms. Therefore, the problem can be restricted to the families where that condition is not satisfied. Let $\sigma(x,y)=(-x,y)$. As $H(x,y)=H(\sigma(x,y))$ for any $(x,y)$, then the entire family~\eqref{eq:Hamce0} is symmetric with respect to the $y$ axis, and, in particular, there are always a pair of critical points with the same critical values.

	As in Section~\ref{sec:proofs-quad}, we  restate the main result, Theorem~\ref{theo:cubic} and prove it. 
	\begin{prop}
		Consider the cubic Hamiltonian differential equation ${dH=0}$, associated with the polynomial $H(x,y)$, given in \eqref{eq:Hamce0}, which always has an  orientation-reversing $\mathbb{Z}_2$-symmetry $\sigma(x,y)=(-x,y)$. Then,
		\begin{enumerate}
			\item If $c=a$, $b=2a$, $H(x,y)$ has a orientation-preserving $\mathbb{Z}_3$-symmetry, $\tau(x,y)=\left(
			-\frac12 x-\frac{\sqrt3}{2}y,\;
			\frac{\sqrt3}{2}x-\frac12 y
			\right)$; and the Tangential Center Problem for $dH=0$ has the solution:
			\[ 
			\TCL{3}=\varOmega^1_{e}  \oplus \varOmega^1_{\sigma} \oplus \varOmega^1_{\tau}=\varOmega^1_{e}  \oplus \varOmega^1_{\sigma}
			\oplus
			\langle \omega_1\rangle
			\oplus \langle \omega_2\rangle ,
			\]
			where $\omega_1=y^2dx$ and $\omega_2=y(y^2-3x^2)dx$.
			\item If $c\neq a$, $b=c=0$, then the Tangential Center Problem for $dH=0$ has the solution:
			\[
			\TCL{3}=\varOmega^1_{re}  \oplus \varOmega^1_{\sigma}=\varOmega^1_{e}  \oplus \varOmega^1_{\sigma}
			\oplus
			\langle \omega_3\rangle,
			\]
			where $\omega_3=\left(y^2-\frac{2}{3}y(y^2-3x^2)\right)dx$.
			\item If $c\neq a$ and 
			\[a+b+c=18 \, b^{2} c + 72 \, b c^{2} + 72 \, c^{3} - b^{2} - 2 \, b c + 8 \, c^{2}=0,\]
			then $H(x,y) - k(a,b,c) = f_1 f_2 f_3$, with $k$ a certain constant depending on $a,b,c$ and $f_1,f_2$ linear and $f_3$ quadratic, and the Tangential Center Problem for $dH=0$ has the solution:
			\[
			\TCL{3}=\varOmega^1_{e}  \oplus \varOmega^1_{\sigma} \oplus \varOmega^1_{\log}(f_1)\oplus \varOmega^1_{\log}(f_2)=\varOmega^1_{e}  \oplus \varOmega^1_{\sigma}
			\oplus
			\langle \omega_4\rangle,
			\]
			where $\omega_4=\left(y^2+\frac{2(3a+b-5c)}{9(a-c)}y(3x^2-y^2)\right)dx$.
			\item In any other case,  the Tangential Center Problem for $dH=0$ has the solution $\TCL{3}=\varOmega^1_{e}  \oplus \varOmega^1_{\sigma}$.
			
		\end{enumerate}
		
	\end{prop}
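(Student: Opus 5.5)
We follow the scheme of Section~\ref{sec:3}: necessity comes from the truncated series of Proposition~\ref{prop-base-of-algo}, and sufficiency from Proposition~\ref{prop:suficientes}.

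\textbf{Reduction modulo exact and symmetric forms.} Write $\omega=\omega_{ex}+\hat\omega$ with $\hat\omega=\sum\alpha_{ij}x^iy^j\,dx$, $1\le i+j\le 3$, $j\ge1$, as in \eqref{basispetrov}. Since $H\circ\sigma=H$ with $\sigma(x,y)=(-x,y)$ orientation-reversing, $\sigma^*(x^iy^jdx)=(-1)^{i+1}x^iy^jdx$. Hence the monomials with $i$ odd ($xy\,dx$, $xy^2dx$, $x^3y\,dx$ is out of degree, $x y^2 dx$, and so on) are $\sigma$-balanced and lie in $\varOmega^1_\sigma$. It therefore suffices to work with the even-in-$x$ part
\[
\hat\omega_0=\bigl(\alpha_{01}y+\alpha_{02}y^2+\alpha_{03}y^3+\alpha_{21}x^2y\bigr)\,dx .
\]
Here $x^2y\,dx$ may be replaced by $-\tfrac13 y(y^2-3x^2)dx+\tfrac13y^3dx$, or kept as is.

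\textbf{Necessity.} Pass to polar coordinates. Now $H=r^2/2+C_3(\theta)r^3+C_4(\theta)r^4$, where $C_3=\sin\theta\,\cos^2\theta-\tfrac13\sin^3\theta$ up to the trigonometric rewriting, and $C_4$ carries $a,b,c$. Invert $H=h^2/2$ to obtain $r(h,\theta)$ to order roughly $h^7$, then compute $K_2,\dots,K_8$ of $I(h)$, each linear in the four $\alpha$'s and polynomial in $(a,b,c)$. We expect $K_2\propto\alpha_{01}$, so $\alpha_{01}=0$. The next coefficients should give one linear relation $K_4$ among $\alpha_{02},\alpha_{03},\alpha_{21}$, then further relations whose determinants factor.

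The binary-tree procedure splits on the vanishing of the pivots $p$. We anticipate the following.
\begin{itemize}
\item[(i)] Generically the $3\times3$ linear system in $(\alpha_{02},\alpha_{03},\alpha_{21})$ has full rank. This yields case (4).
\item[(ii)] The rank drops by two exactly when $c=a$, $b=2a$. This yields case (1).
\item[(iii)] The rank drops by one on the other components of the determinant variety. After primary decomposition, these components are $\{b=c=0\}$ and $\{a+b+c=0,\ 18b^2c+\dots=0\}$. The kernels are spanned by $\omega_3$ and $\omega_4$ respectively.
\end{itemize}
To guarantee no further cancellation, we compute enough coefficients that the ideal stabilizes. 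A Gröbner basis is used for the final step.

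\textbf{Sufficiency.}
\begin{itemize}
\item[(1)] When $c=a$, $b=2a$, the quartic part is $a(x^2+y^2)^2$, so $H\circ\tau=H$. The forms $y^2dx$ and $y(y^2-3x^2)dx$ are checked to lie in $\varOmega^1_e\oplus\varOmega^1_\tau$ by computing $\sum_i(\tau^i)^*\omega$, as in Proposition~\ref{prop:triangle_tan}. The cubic check works because $y^3-3x^2y$ is $\tau$-invariant while $dx$ averages to zero.
\item[(2)] When $b=c=0$, we exhibit polynomials $F,G$ with $\omega_3=dF+G\,dH$ modulo $\varOmega^1_\sigma$. Try $G$ linear and even in $x$; the quartic $ax^4$ term is what allows the degree-$3$ part to cancel.
\item[(3)] Produce the factorization $H-k=f_1f_2f_3$ explicitly. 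We expect $f_{1,2}$ to be lines symmetric under $\sigma$, so $f_2=f_1\circ\sigma$, which explains why only one extra form survives. Then verify that $\omega_4$ equals $\lambda\bigl((H-k)\tfrac{df_1}{f_1}+ (H-k)\tfrac{df_2}{f_2}\bigr)$ modulo $\varOmega^1_e\oplus\varOmega^1_\sigma$. The form $(H-k)\,df_1/f_1=f_2f_3\,df_1$ is polynomial of degree $3$, so one reduces it to the basis \eqref{basispetrov} and compares coefficients.
\end{itemize}

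\textbf{Main obstacle.} We expect two difficulties. The first is the elimination producing the degree-$3$ curve $18b^2c+72bc^2+72c^3-b^2-2bc+8c^2=0$ together with $a+b+c=0$. This requires computing the $K_\mu$ to a fairly high order and decomposing the resulting ideal. The second is showing that on this locus $H-k$ genuinely factors with lines not through the annulus, so that Proposition~\ref{prop:suficientes} applies. For the factorization, we would first impose that $H-k$ vanishes on a line $y=mx+q$ paired with its mirror $y=-mx+q$. We then solve for $(m,q,k)$ and check that the resulting compatibility conditions reproduce exactly this variety.
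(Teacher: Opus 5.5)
Your overall architecture matches the paper's. You reduce modulo $\varOmega^1_e\oplus\varOmega^1_\sigma$ to $y\,dx$, $y^2dx$, $y^3dx$, $x^2y\,dx$; your parity computation $\sigma^*(x^iy^j\,dx)=(-1)^{i+1}x^iy^j\,dx$ is correct. You then extract necessary conditions from the $K_\mu$ and verify sufficiency with Proposition~\ref{prop:suficientes}, and case (1) is handled exactly as in the paper.

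\textbf{The genuine gap: sufficiency in case (3).} If $f_2=\pm f_1\circ\sigma$ and $H\circ\sigma=H$, then $\sigma^*\bigl((H-k)\,df_1/f_1\bigr)=(H-k)\,df_2/f_2$. So the sum you propose is $\sigma$-invariant: it lies in $\varOmega^1_\sigma$ and is zero modulo $\varOmega^1_e\oplus\varOmega^1_\sigma$. Your coefficient comparison therefore cannot produce the $\sigma$-anti-invariant form $\omega_4$; you need the difference. Parametrize the curve by $a=sc$, $c=\frac{s^2-9}{18(s-1)^2}$, $b=-a-c$. Then $k=\frac{9(s-1)^2}{(s+3)^3}$ and $f_{1,2}=\sqrt{s}\,x\mp(y-q)$ with $q=\frac{3(s-1)}{s+3}$. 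The paper verifies $\omega_4=\frac{3(s-1)}{\sqrt{s}\,(s+3)}(H-k)\bigl(\frac{df_1}{f_1}-\frac{df_2}{f_2}\bigr)+dR$ for an explicit polynomial $R$.

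Your plan also misses two features of this curve.
\begin{enumerate}
\item For $s<0$ (that is, $a/c<0$) the lines are complex conjugate. An ansatz with real slopes $\pm m$ recovers only part of the variety. Moreover, $\oint_{\gamma_h}df_j/f_j=0$ becomes a winding-number statement about the real point $(0,q)$. That point is a critical point of $H$, hence lies outside the disc bounded by $\gamma_h$.
\item At $s=0$, i.e.\ $(a,b,c)=(0,\tfrac12,-\tfrac12)$, the two lines coalesce into the double line $y=-1$, with $H-\tfrac13=-f_3\,(y+1)^2$. The difference of logarithmic forms then vanishes identically, so another mechanism is needed. The paper writes $\omega_4=2\bigl(H-\tfrac13\bigr)\,d\bigl(\tfrac{x}{y+1}\bigr)+dR_0$. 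This is a polynomial form because $H-\tfrac13$ vanishes doubly on $y=-1$, and its period over $\gamma_h$ is zero because $x/(y+1)$ is smooth near $\gamma_h$.
\end{enumerate}
All regimes can be treated uniformly: $\frac{1}{\sqrt{s}}(H-k)\bigl(\frac{df_1}{f_1}-\frac{df_2}{f_2}\bigr)=2f_3\bigl((y-q)\,dx-x\,dy\bigr)$ is real and rational in $s$.

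\textbf{Two smaller corrections.} In case (2) your ansatz has the wrong parity. Since $dH$ is $\sigma$-invariant, $G\,dH$ with $G$ even in $x$ lies in $\varOmega^1_\sigma$ and is invisible modulo $\varOmega^1_\sigma$. You need $G$ odd: $G=-2x$, $F=2xH-\frac{x^3}{3}-\frac{2a}{5}x^5$ works. This is just the identity $\omega_3=(2H-x^2-2ax^4)\,dx$ when $b=c=0$. What matters is the absence of $y$-dependent quartic terms; the $ax^4$ term plays no role, since $ax^4\,dx$ is exact.

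In the necessity part, the loci of cases (2) and (3) are curves in $(a,b,c)$-space. They cannot be components of the zero set of a single $3\times 3$ determinant, which is a surface. As in the paper, $K_2,K_4,K_6$ force $\alpha_{01}=0$, $\alpha_{21}=-3\alpha_{03}$, and either $(b-2a)\alpha_{02}=0$ (if $c=a$) or $\alpha_{03}$ proportional to $\alpha_{02}$ (if $c\neq a$). Each of $K_8,\dots,K_{16}$ is then $\alpha_{02}$ times a function of $(a,b,c)$, and $K_8$ alone cuts out only a surface. It is the common zero set of all of them that splits into the two curves.
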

	\begin{proof}
		For this family of Hamiltonians, we know that every $\omega\in\varOmega^1_{e}  \oplus \varOmega^1_{\sigma}$ is a solution of the tangential problem. Therefore, it is convenient to use a refined version of \eqref{basispetrov}, in the following way:  consider $\omega = P(x,y)\,dx + Q(x,y)\,dy\in\Omega^1_{3}(\mathbf{0})$, where
		\begin{equation*}\label{cub1form}
			\begin{split}
				P(x,y) &= a_1 x + a_2 y + a_3 x^2 + a_4 y^2 + a_5 xy + a_6x^3 + a_7y^3 + a_8x^2y + a_9xy^2, \\
				Q(x,y) &= b_1 x + b_2 y + b_3 x^2 + b_4 y^2 + b_5 xy + b_6x^3 + b_7y^3 + b_8x^2y + b_9xy^2.
			\end{split}
		\end{equation*}
		
		Since
		\[
		d\omega=(Q_x-P_y)\,dx\wedge dy,
		\]
		if we want $\omega\in \Omega^1_e\oplus \Omega^1_\sigma$, then $(Q_x-P_y)(x,y)=-(Q_x-P_y)(-x,y)$. 
		If we impose these conditions, we get
		\[a_8=3b_6,\qquad 3a_7=b_9,\qquad 2a_4=b_5,\qquad a_2=b_1.\]
		
		Define $\omega_s=P_s\,dx+Q\,dy$, where
		\[
		P_s
		=
		P-\Bigl(a_2-b_1\Bigr)y
		-\Bigl(a_4-\frac{b_5}{2}\Bigr)y^2
		-\Bigl(a_8-3b_6\Bigr)x^2y
		-\Bigl(a_7-\frac{b_9}{3}\Bigr)y^3.
		\]
		Then $\omega_s\in \Omega^1_e\oplus \Omega^1_\sigma$, and
		\[
		\omega-\omega_s=
		\left(
		(a_2-b_1)y+\left(a_4-\frac{b_5}{2}\right)y^2+(a_8-3b_6)x^2y+\left(a_7-\frac{b_9}{3}\right)y^3
		\right)dx.
		\]
		That is, renaming the coefficients, we can always write any $1$--form $\omega\in\Omega^1_3(\mathbf{0})$ as:
		\begin{equation*}\label{cub1f}
			\omega=\omega_s+\hat{\omega}=\omega_{s}+(Ay+By^2+Cx^2y+Dy^3)dx,
		\end{equation*}
		so
		\[
		I(h)=\oint_{\gamma_h}\omega=\oint_{\gamma_h}\hat{\omega}.
		\]
		Now, we apply our algorithm to $\hat{\omega}$ to express the Abelian integral $I(h)=\displaystyle\sum_{\mu=1}^\infty K_\mu h^\mu$ as a power series in $h$.
		
		In particular, we get $K_\mu=0$ for any $\mu$ odd we compute, and that $K_2=0$ if and only if $A=0$. If we impose this, then $K_4=0$ if and only if $C=-3D$.
		
		As the solutions of the tangential problem have $K_\mu=0$ for all $\mu$, we can impose these two conditions, and our perturbation now writes as:
		\[
		\omega=\omega_s+\big(By^2+D(y^3-3x^2y)\big)\,dx
		\]
		We proceed now to compute $K_6$, imposing the two previous assumptions, and get
		\[
		K_6=k_6\left(24D(a-c)-\frac{16}{3}(3a+b-5c)B\right).
		\]
		where $k_6\neq0$ is a constant. Following the algorithm stated in Section \ref{sec:3}, we divide the study into two cases.
		
		{\bf Case 1:} $\mathbf{c=a}.$ We have
		\[
		K_6=k_6\left(\frac{32}{3}(2a-b)B\right).
		\]
		And here we have two subcases.
		
		{\bf Case 1.1:} $\mathbf{b\neq 2a}.$ In this case, $K_6=0$ if and only if $B=0$. Imposing this, we compute $K_8$, and get that $K_8=0$ if and only if $D=0$. So, in this case, the only perturbations that solve the tangential problem are $\omega=\omega_s$.
		
		{\bf Case 1.2:} $\mathbf{b=2a}.$ Here, $K_6=0$, and the algorithm already yields $K_\mu=0$ for all $\mu\leq 16$. These perturbations are thus a good candidate for a solution of the tangential problem. Our Hamiltonian writes in this case as
		\[
		H_1(x,y)=\frac{x^2+y^2}{2}+x^2y-\frac{y^3}{3}+a(x^2+y^2)^2.
		\]
		In this case, the Abelian integral is:
		\[
		I(h)=B\int_{\gamma_h}y^2dx+D\int_{\gamma_h}y(y^2-3x^2)dx=B\,I_1(h)+D\,I_2(h).
		\]
		where $\gamma_h$ are level curves of $H_1$.
		
		Now, $\int_{\gamma_h}\omega_1=\int_{\gamma_h}y^2dx=0$ following the same reasoning as in Proposition~\ref{prop:triangle_tan} for the same perturbation, since $H_1$ possesses the same symmetries as the cubic Hamiltonian triangle~\eqref{eq:HamT}.
		
		On the other hand, writing $\omega_2=y(y^2-3x^2)dx,$ we have
		\[\omega_2 + \tau^*\omega_2 + (\tau^2)^*\omega_2
		=0,\]
		and so, applying Proposition \ref{prop:suficientes} to $H_1$ and $\omega_2,$ and using the symmetry with respect to $\tau$, we prove that $\int_{\gamma_h}\omega_2=\int_{\gamma_h}y(y^2-3x^2)dx=0$ and, in conclussion, that $I(h)=0$.

	{\bf Case 2:} $\mathbf{c\neq a}.$ In this case, $K_6=0$ if 
	and only if $D=\big(2(3a+b-5c)/9(a-c)\big)B$. If we impose this, we have reduced the Abelian integral to
	\[I(h)=B\int_{\gamma_h}\left(y^2+\frac{2(3a+b-5c)}{9(a-c)}y(3x^2-y^2)\right)dx.\]
	
	We can now compute $K_8$, and we get:
	\[
	K_8=k_8\frac{B}{c-a}T(a,b,c),
	\]
	where $k_8$ is a non-zero constant and 
	\[T(a,b,c)=3 a^{2}( b +10 c) +3 a b(b-2c)+c^{2}(3b -72 a+42 c+10)+b(3a +b-7 c)-3 b^{2} c -6 c a.\]
	
	From this, we do not get a simple relationship between $a,b,c$. As we have, thanks to the algorithm, removed various parameters from the perturbation, it is computationally feasible to compute $K_\mu$, for $\mu=8,\ldots,16,$ impose all of them to be zero, and, using the algebraic manipulator Singular, compute the associated minimal prime ideals. We get
	\begin{enumerate}[$i)$]
		\item $B=0$.
		\item $b=c=0$.
		\item $a+b+c=18 \, b^{2} c + 72 \, b c^{2} + 72 \, c^{3} - b^{2} - 2 \, b c + 8 \, c^{2}=0$.
	\end{enumerate}
	The first one is $B=0$, and in this case we are left only with exact and reversible perturbations as a solution to the problem. 
	
	The second possibility is 
	$b=c=0$, and $a\neq 0$ in order to have cubic terms at the Hamiltonian. In this case, the Hamiltonian is 
	\[
	H_2(x,y)=\frac{x^2+y^2}{2}+x^2y-\frac{y^3}{3}+ax^4,
	\]
	and the Abelian integral simplifies to
	\[
	I(h)=B\int_{\gamma_h} \left(y^2-\frac{2}{3}y(y^2-3x^2)\right)dx.
	\]
	Note that 
	\[
	\omega_3=(y^2-\frac{2}{3}y(y^2-3x^2))dx= (2H_2(x,y)-(x^2+2ax^4))dx.
	\]
	Then,
	\[
	\begin{split}
		\int_{\gamma_h}\omega_3=\int_{\gamma_h} \left(y^2-\frac{2}{3}y(y^2-3x^2)\right)dx&=\int_{\gamma_h}(2H_2(x,y)-(x^2+2ax^4))dx \\
		&=\int_{\gamma_h}(2h-(x^2+2ax^4))dx.
	\end{split}
	\]
	And this last integral is zero by Green's Theorem. Thus, $I(h)=0$ for this perturbation.
	
	In the last case, that is, 
	\[a+b+c=18 \, b^{2} c + 72 \, b c^{2} + 72 \, c^{3} - b^{2} - 2 \, b c + 8 \, c^{2}=0,\]
	we  use $b=-a-c$. So, the second condition becomes
	$$
	18a^2c-36ac^2+18c^3-a^2+9c^2=0,
	$$
	and the Abelian integral is reduced to
	\[
	I(h)=B\int_{\gamma_h}\left(y^2+\frac{4(a-3c)}{9(a-c)}y(3x^2-y^2)\right)dx.
	\]
	
	The second condition, in the parameters $a$ and $c$,
	represents a rational curve, whose parameterization can be given by 
	\[
	s \longmapsto \left(\frac{s(s^2-9)}{18(s-1)^2}, \frac{s^2-9}{18(s-1)^2}\right),\quad s\neq 1,-3.
	\]
	Therefore, in this case we have the 1-parameter family of Hamiltonians
	$$
	H_s(x,y)=\frac{x^2+y^2}{2}+x^2y-\frac{y^3}{3}+\frac{(s^2-9)}{18(s-1)^2}\Big(s x^4-(s+1)x^2y^2+ y^4\Big),
	$$
	and the Abelian integral becomes
	\[
	I(h)=B\int_{\gamma_h}\left(y^2
	+\frac{4(s-3)}{9(s-1)}y(3x^2-y^2)\right)dx.
	\]
	Set $h_0:=9(s-1)^2/(s+3)^3$.
	We have the factorization
	$$
	\widetilde{H}_s(x,y)=H_s(x,y)-h_0=f_1 f_{21} f_{22},
	$$
	where
	$$
	f_1=\frac{\left( {s}^{2}-9 \right) {x}^{2}}{18 \left( s-1 \right)^{2}}-\frac{ \left( {s}^{2}-9 \right) {y}^{2}}{18 \left( s-1 \right)^{2}}+\frac{2y}{3(s-1)}+ \frac{1}{s+3},
	$$
	$$
	f_{21}=\sqrt{s}x-y+\frac{3(s-1)}{s+3},
	\quad
	f_{22}=\sqrt{s}x+y-\frac{3(s-1)}{s+3}.
	$$
	Note that when $s<0$, the polynomials $f_{21}$ and $f_{22}$ are complex. We choose the branch
	$\sqrt{s}=i\sqrt{|s|}$ when $s<0$; the choice is irrelevant, since changing the sign of
	$\sqrt{s}$ exchanges $f_{21}$ with $-f_{22}$ and $f_{22}$ with $-f_{21}$, and changes the
	sign of both the prefactor and $\eta_1-\eta_2$ in the identity below.
	
	Assume $s\neq 0$. Defining the $1$--forms
	$$
	\eta_1=\widetilde{H}_s(x,y)\frac{df_{21}}{f_{21}},\quad
	\eta_2=\widetilde{H}_s(x,y)\frac{df_{22}}{f_{22}}
	$$
	(these are polynomial $1$--forms: $\eta_1=f_1f_{22}\,df_{21}$ and
	$\eta_2=f_1f_{21}\,df_{22}$, the poles being only apparent) and considering the polynomial
	$$
	R(x,y)=\frac{(s-3){x}^{3}y}{3(s-1)}+\frac{( 
		s-3) {x}^{3}}{3(s+3)}-\frac{( s-3 ) x{y}^{3}}{
		9(s-1)}+\frac{2x{y}^{2}}{s+3}+\frac{6( s-1) xy}{
		\left( s+3 \right) ^{2}}+\frac{18(s-1) ^{2}x}{
		\left( s+3 \right) ^{3}},
	$$
	one checks that 
	$$
	\left(y^2
	+\frac{4(s-3)}{9(s-1)}y\big(3x^2-y^2\big)\right)dx=\frac{3(s-1)}{\sqrt {s} \left( s+3 \right)}(\eta_1-\eta_2)+dR(x,y).
	$$
	Hence, since $dR(x,y)$ is an exact polynomial $1$--form,
	$$
	I(h)=B\int_{\gamma_h}\left(y^2
	+\frac{4(s-3)}{9(s-1)}y(3x^2-y^2)\right)dx=
	B\int_{\gamma_h}\frac{3(s-1)}{\sqrt {s} \left( s+3 \right)}(\eta_1-\eta_2).
	$$
	Now, restricting to $\gamma_h$, where $\widetilde{H}_s=h-h_0$, we obtain
	$$
	I(h)=\frac{3B(s-1)}{\sqrt {s} \left( s+3 \right)}(h-h_0)\left(\int_{\gamma_h}\frac{df_{21}}{f_{21}}-\int_{\gamma_h}\frac{df_{22}}{f_{22}}\right), 
	$$
	which vanishes identically because the cycle $\gamma_h$ does not turn around neither the (real or complex) straight line $\{f_{21}=0\}$ nor $\{f_{22}=0\}$.

	Assume $s=0$, so that $f_{21}=-(y+1)$, $f_{22}=-f_{21}$ and $h_0=1/3$: the two lines have
	coalesced into the double line $\{y=-1\}$, one has $\eta_1=\eta_2$, and the previous
	argument degenerates. Define
	\begin{equation*}\label{eq:omega0}
		\omega_0:=d\Bigl(\frac{x}{f_{21}}\Bigr)
		=\frac{dx}{f_{21}}-x\,\frac{df_{21}}{f_{21}^{2}},
		\qquad
		\eta:=\widetilde{H}_0\,\omega_0
		=\widetilde{H}_0\,\frac{dx}{f_{21}}-x\,\widetilde{H}_0\,\frac{df_{21}}{f_{21}^{2}},
	\end{equation*}
	and
	\[
	R_0(x,y)=x^3y-\frac{x^3}{3}-\frac{xy^3}{3}+\frac{2xy^2}{3}-\frac{2xy}{3}+\frac{2x}{3}.
	\]
	Note that, again, $\eta$ is a polynomial $1$-form: the double pole of $\omega_0$ along
	$\{f_{21}=0\}$ is only apparent, because
	$\widetilde{H}_0=f_1f_{21}f_{22}=-f_1f_{21}^{2}$ vanishes doubly along the line. Since
	\[
	\Bigl(y^2
	+\frac{4}{3}y\big(3x^2-y^2\big)\Bigr)dx=-2\,\widetilde H_0\,\frac{dx}{f_{21}}
	+2x\,\widetilde H_0\,\frac{df_{21}}{f_{21}^{2}}
	+dR_0
	=-2\,\eta+dR_0
	\]
	then
	\[
	I(h)=B\int_{\gamma_h}\bigl(-2\,\eta+dR_0\bigr)=-2B\Bigl(h-\frac13\Bigr)\int_{\gamma_h} d\Bigl(\frac{x}{f_{21}}\Bigr),
	\]
	as $dR_0$ is exact and on $\gamma_h$ one has $\widetilde{H}_0=h-h_0$. The previous integral is identically zero
	because the oval $\gamma_h$ does not intersect $\{f_{21}=0\}=\{y=-1\}$ (in fact, for these values of the parameters, the line $y=-1$ is full of critical points and hence the period annulus of the center does not meet this line). Consequently, 
	$\varphi:=x/f_{21}$ is smooth near the oval $\gamma_h$ and 
	$d\varphi$ is exact there.
\end{proof}

\begin{rema}\label{rema:centers3}
	
	As in the quadratic case, the classification obtained for the Tangential Center Problem also provides information about the Infinitesimal Center Problem. 
	
	The family~\eqref{eq:Hamce0} lies in the intersection of the Hamiltonian and reversible planar equations. Therefore, the generic case (4) corresponds to this intersection, and the solution of the Tangential Center Problem is the direct sum of the tangent spaces associated with the Hamiltonian and reversible subfamilies. 
	
	In case (3), namely, \[ a+b+c=18 b^{2} c + 72 b c^{2} + 72 c^{3} - b^{2} - 2 b c + 8 c^{2}=0, \] we obtain the intersection with the family of centers having two invariant lines and one invariant quadric, with transversal intersection. 
	
	In case (2), where $c\neq a$ and $b=c=0$, we have that \[ dH_2+\ep C\left(y^2-\frac{2}{3}y(y^2-3x^2)\right)=0 \] is Darboux integrable, and its first integral is \begin{multline*} \Big(24a{\ep}^{3}{C}^{3}{x}^{3} +12{\ep}^{4}{C}^{4}{x}^{2}y -36a{\ep}^{2}{C}^{2}{x}^{2} +6\ep C \left({\ep}^{2} {C}^{2}+6a \right) x\\ -4{\ep}^{4}{C}^{4}{y}^{3} +6{\ep}^{4}{C}^{4}{y}^{2} -3{\ep}^{2}{C}^{2}-18a \Big) {\rm e}^{2\ep C x}, \end{multline*} so this subcase yields an intersection with another family of centers.
	
	Finally, in case (1), where $a=c$ and $b=2a$, we have not found a simple family of centers intersecting our family. We therefore leave as an open problem the question of whether such a family exists.
	
\end{rema}

\section*{Acknowledgments}

The authors are partially supported by grant number PID2023-151974NB-I00 funded by MICIU/AEI/10.13039/501100011033/FEDER, UE. The second and third authors have been partially supported by the European Union, the European Regional Development Fund, and the Regional Government of Extremadura, in collaboration with the Managing Authority and the Ministerio de Hacienda, through Project GR24042.

	\end{document}